\numberwithin{equation}{section}
\definecolor{dark-green}{rgb}{0.1,0.1,0.3}
\newcommand{\floor}[1]{\left\lfloor #1 \right\rfloor}
\newcommand{\ab}[1]{\left\lvert #1 \right\rvert}
\newcommand{\md}[1]{\left(\text{mod}\ #1\right)}
\newcommand{\sg}{\sigma}
\newcommand{\al}{\alpha}
\newcommand{\eps}{\epsilon}
\newcommand{\bt}{\beta}
\newcommand{\vp}{\varphi}
\newcommand{\lm}{\limits}
\newcommand{\st}{\substack}
\newcommand{\cg}{\equiv}
\newcommand{\mc}{\mathcal}
\newcommand{\fr}{\frac}
\newcommand{\tps}{\texorpdfstring}
\newtheorem{thm}{Theorem}[section]
\newtheorem{thmA}{Theorem}
\newtheorem{cor}[thm]{Corollary}
\newtheorem{corA}{Corollary}
\newtheorem{lem}[thm]{Lemma}
\theoremstyle{definition}
\newtheorem{defn}[thm]{Definition}
\newtheorem{rem}[thm]{Remark}
\begin{document}

\title{On correlations of certain multiplicative functions}
\author{R. Balasubramanian}
\address{Department of Mathematics\\
  Institute of Mathematical Sciences\\
  Chennai, India-600113}
\email[R. Balasubramanian]{balu@imsc.res.in}
 \author{Sumit Giri}
\address{School of Mathematics, Tel Aviv University\\ P.O.B. 39040, Ramat Aviv\\ Tel Aviv 69978, Israel.}
\email[Sumit Giri]{sumit.giri199@gmail.com}
\author{Priyamvad Srivastav}
\address{Department of Mathematics\\
  Institute of Mathematical Sciences\\
  Chennai, India-600113}
\email[Priyamvad Srivastav]{priyamvads@imsc.res.in}
  \date{}
\thispagestyle{empty}

\begin{abstract}
In this paper, we study sums of shifted products $\sum\lm_{n \leq x} F(n) G(n-h)$ for any $|h| \leq x/2$ and arithmetic functions $F=f*1$ and $G=g*1$, with $f$ and $g$ small. We obtain asymptotic formula for different orders of magnitude of $f$ and
$g$. We also provide asymptotic formula for sums of the type $\sum\lm_{n \leq x} \mu^2(n) G(n-h)$, where $G=g*1$ and $g$ is small. For small order of magnitudes of $f$ and $g$, we improve the error terms and make them independent of $h$.
\end{abstract}

\maketitle{}

\section{Introduction}
\thispagestyle{empty}
Let $F$ and $G$ be two arithmetic functions. In \cite{7}, the first two authors studied the problem of getting an asymptotic formula for the sum $\sum\lm_{n \leq x} F(n)G(n-h)$, where $F=f*1$ and $G=g*1$, under the assumption that for primes $p$, $f(p)$ and $g(p)$ are close to $1$. In this paper, we continue the investigation (See Theorem \ref{main1} and Theorem \ref{main2}). We also show that this method is equally applicable to the asymptotic formula for $\sum\lm_{n \leq x}\mu^2(n)G(n-h)$. \\ \vskip 0.07in
In \cite{15}, Mirsky considers the general sum $\sum\lm_{n \leq x}F_1(n+k_1) \dots F_s(n+k_s)$, with $F_j=1*f_j$ and $f_j(p)=O(p^{-\sg+\eps})$ for each $j$. In \cite{3}, Stepanauskas considers $\sum\lm_{n \leq x} F(n)G(n-h)$ under the weaker assumption $\sum\lm_p \fr{f(p)+g(p)-2}{p}$ is convergent. In \cite{4}, Stepanauskas and Siaulys also consider the sum $\sum\lm_{p \leq x} F(p+1) G(p+2)$, where the sum runs over the primes. \vskip 0.06in
In \cite{16}, Coppola, Murty, Saha consider the problem of $\sum\lm_{n \leq x}F(n)G(n-h)$ under a general condition that $F$ and $G$ admit a Ramanujan expansion. \vskip 0.06in
A considerable amount of work has been done for such shifted sums. For instance, one can see papers of Carlitz\cite{13}, Choi and Schwarz\cite{5}, Katai \cite{2} and Rearick \cite{12}. \vskip 0.06in

Since all these results have been proved under different conditions, it is difficult to compare these results. However, functions like $\fr{\vp_s(n)}{n^s},\fr{\sg_s(n)}{n^s}$ are the common threads between these results and the results proved in this paper. We shall later compare these results in section \ref{compare}.

\medskip

\section{Statement of the theorems}

In \cite{7}, the following theorem was proved

\begin{thmA}
\label{BG}
Let $E_1(x)=\sum\lm_{n \leq x}|f(n)|$ and $E_2(x)=\sum\lm_{n \leq x}|g(n)|$. For $h \neq 0$, let 
$$ C(h) = \sum\lm_{(d_1,d_2) \mid h} \fr{f(d_1)g(d_2)}{[d_1,d_2]}. $$
Then 
$$ \sum\lm_{n \leq x} F(n)G(n-h) = x C(h) + O\left( h E_1(x) E_2(x) \right). $$
\end{thmA}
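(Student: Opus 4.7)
The approach I would take is elementary: expand the Dirichlet convolutions $F = f * 1$ and $G = g * 1$, interchange the order of summation, and reduce the inner sum to a count of integers in an arithmetic progression. Without loss of generality assume $h > 0$.

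First I would write $F(n) = \sum\lm_{d_1 \mid n} f(d_1)$ and $G(n - h) = \sum\lm_{d_2 \mid n - h} g(d_2)$ (valid for $n > h$), and interchange the order of summation to obtain
\[
\sum\lm_{n \leq x} F(n) G(n - h) \;=\; \sum\lm_{d_1, d_2} f(d_1) g(d_2) \cdot N_{d_1, d_2}(x, h),
\]
where $N_{d_1, d_2}(x, h)$ denotes the number of integers $n \in (h, x]$ satisfying $d_1 \mid n$ and $d_2 \mid n - h$. The simultaneous congruences $n \cg 0 \pmod{d_1}$ and $n \cg h \pmod{d_2}$ are solvable (by CRT) if and only if $(d_1, d_2) \mid h$, in which case the solutions form a single arithmetic progression of common difference $[d_1, d_2]$, whose smallest positive term is bounded by $[d_1, d_2]$ (independently of $h$). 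A routine count yields
\[
N_{d_1, d_2}(x, h) = \fr{x - h}{[d_1, d_2]} + O(1)
\]
when $(d_1, d_2) \mid h$, with the $O(1)$ being an absolute constant, and $N_{d_1, d_2}(x, h) = 0$ otherwise.

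Substituting back and separating the main term from the $O(1)$ error gives
\[
\sum\lm_{n \leq x} F(n) G(n - h) \;=\; (x - h) C(h) \,+\, O\!\left(\sum\lm_{\substack{(d_1, d_2) \mid h \\ d_1, d_2 \leq x}} |f(d_1)| |g(d_2)|\right),
\]
and the error is trivially at most $E_1(x) E_2(x) \leq h E_1(x) E_2(x)$. To reach the advertised shape $x C(h) + O(h E_1 E_2)$, I would rewrite $(x - h) C(h) = x C(h) - h C(h)$ and bound the correction $h C(h)$ using the crude estimate $|C(h)| \leq E_1(x) E_2(x)$, which follows from $1/[d_1, d_2] \leq 1$ (together with a routine tail estimate if $C(h)$ is taken as the full, untruncated series). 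A boundary contribution from $n \leq h$, where $n - h$ is non-positive, is handled by a trivial divisor bound and also fits under $O(h E_1 E_2)$.

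The main obstacle is the bookkeeping for uniformity in $h$: specifically, verifying that the $O(1)$ in the CRT count is genuinely independent of $h$ (it is, because the first term of the arithmetic progression is bounded solely in terms of $[d_1, d_2]$), and confirming that the correction $h C(h)$, the boundary piece from $n \leq h$, and any tail of $C(h)$ all fit uniformly under $O(h E_1(x) E_2(x))$. None of these steps is deep on its own, but their combined organization is the most delicate part of the argument.
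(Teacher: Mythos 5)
Your proposal is correct and follows essentially the same route as the source: expand both convolutions, interchange summation, and count the $n$ lying in a single residue class modulo $[d_1,d_2]$ (nonempty precisely when $(d_1,d_2)\mid h$), which is exactly the skeleton the paper itself uses in its proof of Theorem \ref{main1}. The only caveat is the convergence of the untruncated series $C(h)$, which you correctly flag and which is an artifact of the hypotheses on $f$ and $g$ being suppressed in the quoted statement.
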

\medskip

Our aim is to improve the error term.
\begin{defn}
For $\al>0$, define $\mc{A}_{\al}$ to be the class of arithmetic functions $g$ satisfying $g(n)=O(n^{-\al})$ for each $n$. \\ \vskip 0.02in
\end{defn}
For ease of exposition, assume that $f \in \mc{A}_{\al}$ and $g \in A_{\bt}$ for some $0< \al \leq \bt<1$. We also assume that $F(n)$ and $G(n)$ are $0$, if $n \leq 0$. Then, Theorem \ref{BG} gives
\begin{corA}
\label{Cor1}
We have, under the conditions above,
\begin{equation*}
\sum\lm_{n \leq x} F(n)G(n-h)=x C(h) + O\left( h x^{2-\al-\bt}  \right).
\end{equation*}
\end{corA}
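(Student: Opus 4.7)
The proof is a direct application of Theorem \ref{BG}: once we substitute the hypothesis $f \in \mc{A}_\al$, $g \in \mc{A}_\bt$ into the error term, everything reduces to estimating partial sums of $n^{-\al}$ and $n^{-\bt}$. So the whole task is to bound $E_1(x)$ and $E_2(x)$ and multiply.

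First I would recall that $f(n) \ll n^{-\al}$ and $g(n) \ll n^{-\bt}$ by definition of the class $\mc{A}_\al$. Then, since $0 < \al \le \bt < 1$, the crude comparison of sum with integral gives
\[
E_1(x) = \sum_{n \le x} |f(n)| \ll \sum_{n \le x} n^{-\al} \ll x^{1-\al}, \qquad E_2(x) = \sum_{n \le x} |g(n)| \ll \sum_{n \le x} n^{-\bt} \ll x^{1-\bt}.
\]
Both bounds are uniform in $h$ (they do not see $h$ at all). Here the hypothesis $\al,\bt < 1$ is used in an essential way, since if either exponent were equal to $1$ we would pick up a logarithmic factor.

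Plugging these into the error term supplied by Theorem \ref{BG} gives
\[
h\, E_1(x)\, E_2(x) \ll h \cdot x^{1-\al} \cdot x^{1-\bt} = h\, x^{2-\al-\bt},
\]
which is exactly the claimed bound, while the main term $xC(h)$ is already furnished by Theorem \ref{BG}. Combining yields the stated asymptotic.

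There is no real obstacle here; the only point needing a moment of thought is the uniformity in $h$ of the estimates for $E_1(x)$ and $E_2(x)$, which is immediate because neither sum involves $h$. The corollary should really be read as a reformulation of Theorem \ref{BG} tailored to the polynomial-decay class $\mc{A}_\al$, making the subsequent discussion (and the improvements the paper will prove) directly comparable with the classical error terms in the literature.
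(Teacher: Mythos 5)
Your proof is correct and is exactly the argument the paper intends (the corollary is stated as an immediate consequence of Theorem \ref{BG}): bound $E_1(x)\ll x^{1-\al}$ and $E_2(x)\ll x^{1-\bt}$ using $f\in\mc{A}_{\al}$, $g\in\mc{A}_{\bt}$ with $\al,\bt<1$, and substitute into the error term $O(hE_1(x)E_2(x))$. Nothing further is needed.
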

\bigskip
Let 
\begin{equation}
\label{Ealbtx}
E(x;\al,\bt) = \begin{cases} x^{1-\al}, & \al<\bt \text{ and }\al<1, \\ x^{1-\al} \log x, & \al=\bt<1, \\ \log x, & 1=\al<\bt, 
\\ \log^2 x, & \al=\bt=1, \\ 1, & 1<\al<\bt. \end{cases}
\end{equation}
\medskip
Then, we prove

\begin{thm}
\label{main1}
Suppose that $f \in \mc{A}_{\al}$ and $g \in \mc{A}_{\bt}$. Then, uniformly for all $h$, $|h| \leq x/2$, we have
$$ \sum\lm_{H \leq n \leq x} F(n)G(n-h) = (x-H) C(h) + O\left( E(x;\al,\bt) \right), $$
where
$$ H=\begin{cases} 1, & h \leq 0, \\ h, & h>0, \end{cases} $$
$$ C(h) = \sum\lm_{\st{a,b \geq 1 \\ (a,b) \mid h}} \fr{f(a) g(b)}{[a,b]}, $$
and the $O$-constant is absolute.
\end{thm}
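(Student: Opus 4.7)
Writing $F(n) = \sum_{a \mid n} f(a)$, $G(n-h) = \sum_{b \mid n-h} g(b)$ and interchanging the order of summation, one obtains
\begin{equation*}
\sum_{H \leq n \leq x} F(n) G(n-h) = \sum_{a, b} f(a)\, g(b)\, N(a, b), \quad N(a, b) = \#\{n \in [H, x] : a \mid n,\ b \mid n - h\}.
\end{equation*}
The summation is effective only for $a \leq x$ and $b \leq 3x/2$. By the Chinese Remainder Theorem, $N(a, b) = 0$ unless $(a, b) \mid h$, in which case $N(a, b) = (x - H + 1)/[a, b] + \eta(a, b, h)$ with $|\eta| \leq 1$. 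Extracting the principal part and extending the sum to all $a, b \geq 1$ produces the main term $(x - H + 1)\, C(h)$; the tail beyond $\max(a, b) > x$ is controlled via the parametrization $d = (a, b)$, $a = du$, $b = dv$, $(u, v) = 1$, which converts that tail to $\sum_{d \mid h} d^{-\al - \bt - 1} \sum_{u} u^{-\al - 1} \sum_{v > x/(du)} v^{-\bt - 1}$. A direct evaluation shows this is $O(x^{-\al})$, so after multiplication by $(x - H + 1)$ it contributes only $O(x^{1 - \al})$. Replacing $(x - H + 1)$ by $(x - H)$ introduces a further $O(C(h)) = O(1)$ discrepancy.

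The remaining error $R = \sum_{a, b} f(a) g(b)\, \eta(a, b, h)$ I would handle by splitting according to whether $[a, b] \leq x$ or $[a, b] > x$. In the first regime, $|\eta| \leq \mb{1}_{(a, b) \mid h}$ reduces the task to the ``under-hyperbola'' sum $\sum_{d \mid h} d^{-\al - \bt} \sum_{uv \leq x/d} u^{-\al} v^{-\bt}$. A standard splitting shows the inner sum is $\asymp (x/d)^{1 - \al}$ for $\al < \bt < 1$ and $\asymp (x/d)^{1 - \al} \log(x/d)$ for $\al = \bt < 1$; summation over $d \mid h$ against $d^{-1-\bt}$ (convergent) then gives $O(x^{1-\al})$ or $O(x^{1-\al} \log x)$, exactly matching $E(x; \al, \bt)$. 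In the second regime, $N(a, b) \in \{0, 1\}$ and $(x - H + 1)/[a, b] < 1$, so I would write $R_2$ as the difference between $\sum_{[a, b] > x} f(a) g(b) N(a, b)$ and the corresponding tail of the main term; the latter again evaluates to $O(x^{1-\al})$ by the same parametrization. For the former, I would interchange the order to $\sum_{n \in [H, x]} \sum_{a \mid n,\ b \mid n-h,\ [a, b] > x} f(a) g(b)$ and use $[a, b] > x \Rightarrow b > x/a$, together with the divisor-sum bound $\sum_{b \mid m,\ b > x/a} |g(b)| \leq (a/x)^{\bt} d(m)$, to reduce to an estimate controlled by $\sum_{n \leq x} d(n - h)\, \sigma_{\bt - \al}(n)$, which yields $O(x^{1 - \al})$ up to the log factors already absorbed by $E(x; \al, \bt)$ in the boundary cases.

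\textbf{Main obstacle.} The heart of the difficulty lies in the $[a, b] > x$ regime. The trivial bound $|\eta| \leq 1$ there produces only the $O(x^{2 - \al - \bt})$ of Corollary~\ref{Cor1}, which is exactly what Theorem~\ref{main1} is meant to improve. Achieving the sharp $O(x^{1 - \al})$ requires fully exploiting that $N(a, b) \in \{0, 1\}$ when $[a, b] > x$, combined with the hyperbola-type structure of pairs $(a, b)$ and mean-value estimates for shifted divisor sums, all while keeping the $O$-constant absolute and independent of $h$. The natural pointwise bounds for the intermediate sums involve $\sigma_{-1}((a, h))$-type factors which individually can be as large as $\log h$, so they must be averaged against $|f(a)|$ rather than taken pointwise; it is precisely this averaging that produces the clean $\sum_{d \mid h} d^{-1 - \bt} = O(1)$ savings and ultimately the absolute error $E(x; \al, \bt)$.
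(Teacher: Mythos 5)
Your overall skeleton (complete the main term to $C(h)$, bound the completion tail, bound the $O(1)$-per-pair error for $[a,b]\leq x$, and treat $[a,b]>x$ separately) is the same as the paper's, and your first two error terms correspond exactly to its Lemmas \ref{Lem1} and \ref{Lem2}. The gap is in the regime $[a,b]>x$, which you correctly identify as the heart of the matter but do not close. Your proposed chain of bounds there cannot yield $O(E(x;\al,\bt))$: replacing $\sum_{b\mid n-h,\ b>x/a}|g(b)|$ by $(a/x)^{\bt}d(n-h)$ and summing over $a\mid n$ leads to $x^{-\bt}\sum_{n\leq x}d(n-h)\,\sg_{\bt-\al}(n)$, and since $\sg_{\bt-\al}(n)\geq n^{\bt-\al}$ this quantity is already $\gg x^{-\al}\sum_{x/2\leq n\leq x}d(n-h)\gg x^{1-\al}\log x$. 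In the main case $\al<\bt$, $\al<1$ the target is $E(x;\al,\bt)=x^{1-\al}$ with \emph{no} logarithm, so this route provably falls short by at least a factor of $\log x$ (and by more once $\sg_{-(\bt-\al)}(n)=n^{o(1)}$ is accounted for honestly). The "log factors absorbed by $E$" exist only in the boundary cases $\al=\bt$ or $\al=1$.

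The missing idea is the complementary-divisor substitution of the paper's Lemma \ref{Lem3}: for $a\mid m$, $b\mid m-k$ with $ab\geq y$, write $m=ac$, $m-k=bd$, so that the constraint becomes $cd\leq m(m-k)/y\ll m$ and the weight becomes $a^{-\al}b^{-\bt}=c^{\al}d^{\bt}m^{-\al}(m-k)^{-\bt}$. One then sums over $m$ in a fixed residue class modulo $[c,d]$ subject to $m\geq cd$, so the decaying weight $m^{-\al-\bt}$ together with Lemma \ref{Lem2}(b) (applied with the roles of the exponents on $c$ and $d$ swapped) produces the clean $O(y^{1-\al})$ with an absolute constant; the reduction from $[c,d]\geq x$ to the coprime case $cd\geq x/l$ via $l=(c,d)\mid h$ then gives part (b). Without some device of this kind that exploits the smallness of the cofactors $c=m/a$, $d=(m-k)/b$ rather than pointwise divisor bounds, the $[a,b]>x$ contribution cannot be brought below $x^{1-\al}\log x$, and the theorem as stated is not reached.
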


\begin{rem}
Theorem \ref{main1} improves Theorem \ref{BG} in all cases(in terms of $h$) and also improves upon Corollary \ref{Cor1} in terms of $x$ and $h$. 
\end{rem}

\begin{rem}
 Theorem \ref{main1} also covers the case $h=0$. Also, since $f(a) \ll a^{-\al}$, $g(b) \ll b^{-\bt}$, it follows that $C(h)$
is well defined. If $f$ and $g$ are multiplicative, then $C(h)$ admits a product expansion
$$ C(h)=\prod\lm_p \left( \sum\lm_{\min\{e_1,e_2\} \leq v_p(h)} \fr{f(p^{e_1}) g(p^{e_2})}{p^{\max\{e_1,e_2\}}} \right). $$
\end{rem}

\medskip

The method of proof of Theorem \ref{main1} also applies to study sums of the form $\sum\lm_{n \leq x} \mu^2(n)G(n-h)$. \vskip 0.06in

Let 
\begin{equation}
\label{E1albtx}
E_1(x;\al) = \begin{cases} x^{1-\al}, & 0<\al \leq 1/2, \\ x^{1/2}, & \al>1/2. \end{cases}
\end{equation}

We prove

\begin{thm}
\label{main2}
Let $G(n)=\sum\lm_{d \mid n} g(d)$, where $g \in \mc{A}_{\al}$ for some $\al>0$. Let $\eps>0$. Then, uniformly for all $|h| \leq x/2$, we have
\begin{equation*}
\sum\lm_{n \leq x} \mu^2(n) G(n-h)=(x-H) K(h) + O\left( x^{\eps} E_1(x;\al) \right),
\end{equation*}
where
$$ K(h) = \sum\lm_{\st{a,b \geq 1 \\ (a^2,b) \mid h}} \fr{\mu(a) g(b)}{[a^2,b]},$$
and $H$ is as defined in Theorem \ref{main1}. 
\end{thm}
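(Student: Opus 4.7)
The plan is to mirror the proof of Theorem \ref{main1}, with the identity $\mu^{2}(n)=\sum_{d^{2}\mid n}\mu(d)$ playing the role of $F=f*1$. Expanding $\mu^{2}$ in this way and writing $G(n-h)=\sum_{b\mid n-h}g(b)$, swapping the order of summation gives
\begin{equation*}
\sum_{H\le n\le x}\mu^{2}(n)G(n-h)=\sum_{a,b}\mu(a)g(b)N(a,b),
\end{equation*}
where $N(a,b)=\#\{n\in[H,x]:a^{2}\mid n,\,b\mid n-h\}$. By the Chinese Remainder Theorem, $N(a,b)=0$ unless $(a^{2},b)\mid h$, and in that case $N(a,b)=(x-H)/[a^{2},b]+O(1)$. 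Separating the main terms yields
\begin{equation*}
\sum_{H\le n\le x}\mu^{2}(n)G(n-h)=(x-H)K^{*}(h)+R,
\end{equation*}
where $K^{*}(h)$ is the truncation of $K(h)$ to $a\le\sqrt{x}$, $b\le 2x$, and $R$ collects the $O(1)$ CRT remainders.

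Next I would complete $K^{*}(h)$ to $K(h)$. Each $k=(a^{2},b)\mid h$ factors uniquely as $k=k_{1}k_{2}^{2}$ with $k_{1},k_{2}$ coprime squarefree; for squarefree $a$ the constraint $(a^{2},b)=k$ forces a clean multiplicative parametrization of $(a,b)$ in terms of $k_{1},k_{2}$ and coprime parameters $a',b'$, so that the resulting sums factor. Bounding the tails $a'>\sqrt{x}/(k_{1}k_{2})$ and $b'>2x/(k_{1}k_{2}^{2})$ using $|g(b)|\ll b^{-\alpha}$, and then summing over $k\mid h$ (paying a factor $\tau(h)\ll x^{\epsilon}$), gives $|K(h)-K^{*}(h)|\ll x^{-1/2+\epsilon}+x^{-\alpha+\epsilon}$. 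Multiplying by $(x-H)$, this contributes $O(x^{1/2+\epsilon}+x^{1-\alpha+\epsilon})$, which sits comfortably inside $x^{\epsilon}E_{1}(x;\alpha)$.

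The main obstacle is estimating $R$. The trivial bound
\begin{equation*}
|R|\le\sum_{a\le\sqrt{x}}\sum_{\substack{b\le 2x\\(a^{2},b)\mid h}}|g(b)|\ll\sqrt{x}\sum_{b\le 2x}b^{-\alpha}\ll x^{3/2-\alpha}
\end{equation*}
loses a factor of $\sqrt{x}$ when $\alpha\le 1/2$ and a factor of $x^{1-\alpha}$ when $1/2<\alpha<1$. To recover this, following the corresponding step in the proof of Theorem \ref{main1}, I would express the CRT remainder as a difference of sawtooth values
\begin{equation*}
N(a,b)-\frac{x-H}{[a^{2},b]}=\psi\!\left(\frac{H-1-n_{0}}{[a^{2},b]}\right)-\psi\!\left(\frac{x-n_{0}}{[a^{2},b]}\right),
\end{equation*}
where $n_{0}=n_{0}(a,b,h)$ is the joint CRT residue, and then invoke the oscillation argument from Theorem \ref{main1}: for each $k\mid h$ the parametrization above reduces the double sum to a coprime sum over $a',b'$ against a linearly parametrized $\psi$, and Abel summation combined with elementary estimates for partial sums of $\psi$ provides the required saving of $\sqrt{x}$. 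The extra $x^{\epsilon}$ in the final error absorbs the divisor bound $\tau(h)$ and the mild logarithmic losses from the sawtooth analysis.
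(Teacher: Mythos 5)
There is a genuine gap, and it sits exactly at the hardest point of the theorem. Your decomposition extracts the main term $(x-H)/[a^2,b]$ for \emph{every} pair with $a\le \sqrt{x}$, $b\le 2x$, so your remainder $R$ contains the contribution of all pairs with $[a^2,b]>x$ (moduli up to size $x^2$), where $N(a,b)\in\{0,1\}$ almost always and the ``$O(1)$'' is the entire count, not an error about a main term. You correctly observe that the trivial bound on $R$ is $x^{3/2-\al}$ and hence useless, but the proposed rescue --- writing the remainder as a difference of sawtooth values $\psi$ and invoking ``the oscillation argument from Theorem \ref{main1}'' --- does not exist: the paper's proof of Theorem \ref{main1} contains no sawtooth or Abel-summation step. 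It avoids the problem entirely by only extracting main terms on the range $[a,b]\le x$, where the $O(1)$'s sum acceptably (Lemma \ref{Lem2}(c)), and by bounding the tail $[a,b]>x$ with $n$ kept as the outer variable (Lemma \ref{Lem3}(b)). Moreover, no cancellation can be expected in your $\psi$-sums in general, since $g$ is only assumed to satisfy $|g(b)|\ll b^{-\al}$ with no sign or multiplicative structure; claiming a $\sqrt{x}$ saving from ``elementary estimates for partial sums of $\psi$'' is not an argument.

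The correct repair is structural: split at $[a^2,b]\le x$ versus $[a^2,b]>x$ \emph{before} applying CRT. On the range $[a^2,b]\le x$ the $O(1)$'s contribute $\sum\lm_{[a^2,b]\le x}|g(b)|=O(E_1(x))$, which is a lattice-point count (Lemma \ref{Lem5}(b), via the parametrization $(a^2,b)=l_1^2l_2$), and completing the singular series costs $O(E_1(x))$ by Lemma \ref{Lem7}(b) --- this part you essentially have. For the tail one must \emph{not} sum over $(a,b)$ first; instead one keeps $n$ outside and bounds
\begin{equation*}
\sum\lm_{H \leq n \leq x} \sum\lm_{\st{a^2 \mid n \\ b \mid n-h \\ [a^2,b]>x}} |g(b)|
\end{equation*}
by a dyadic decomposition $a\sim A$, $b\sim B$ with $A\le x^{1/2}$, $B\le x$, $A^2B>x$: the divisor bound gives $\sum_{b\mid n-h,\,b\sim B}b^{-\al}\ll B^{-\al}x^{\eps}$, and counting $n\cg 0\md{a^2}$ gives $\ll x^{1+\eps}/(AB^{\al})$ per block, which sums to $O(x^{\eps}E_1(x))$ under the constraint $A^2B>x$ (this is Lemma \ref{Lem4} and is the sole source of the $x^{\eps}$ in the statement). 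Without this step, or a genuine substitute for it, your proof does not close.
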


\begin{rem}
In the appendix, we shall remark how to remove the $x^{\eps}$ from the error term when $\al$ is not in a neighborhood of $1/2$. 
\end{rem}

\begin{rem}
Theorem \ref{main2} also covers the case $h=0$. Also, $K(h)$ is well-defined because of the condition $g \in A_{\al}$. Again, if $g$ is multiplicative, then $K(h)$ admits a product expansion
$$ K(h) = \prod\lm_p \left( \sum\lm_{\max\{2e_1,e_2\} \leq v_p(h)} \fr{\mu(p^{e_1}) g(p^{e_2}) }{p^{\max\{2e_1,e_2\}}} \right).$$
\end{rem}

\medskip

By taking $G(n)=\fr{\vp(n)}{n}$, we have

\begin{cor}
\label{Cor2}
Uniformly for $|h| \leq x/2$, we have
$$ \sum\lm_{n \leq x} \mu^2(n) \fr{\vp(n-h)}{n-h} = (x - H) \prod\lm_p \left( 1-\fr{2}{p^2} \right) \prod\lm_{p \mid h} \left( 1+\fr{1}{p^3-2p} \right) + O\left( x^{1/2} \right) .$$
\end{cor}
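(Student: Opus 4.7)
The plan is to apply Theorem \ref{main2} with $G(n) = \vp(n)/n$ and then to evaluate $K(h)$ via its Euler product. Since $\vp(n)/n = \sum_{d \mid n} \mu(d)/d$, I would take $g(d) = \mu(d)/d$, so that $|g(d)| \leq 1/d$ and hence $g \in \mc{A}_1$, i.e.\ $\al = 1$. With $\al = 1 > 1/2$, formula \eqref{E1albtx} gives $E_1(x;\al) = x^{1/2}$, so Theorem \ref{main2} immediately produces the main term with an error $O(x^{1/2+\eps})$. To remove the $x^{\eps}$ and arrive at the clean $O(x^{1/2})$ of the corollary, I would invoke the appendix-based sharpening promised in the Remark following Theorem \ref{main2}: since $\al = 1$ lies well away from the critical value $1/2$, the $x^{\eps}$ factor can be dropped. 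This is the one genuinely non-routine input.

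It remains to simplify $K(h)$. Both $\mu$ and $g$ are multiplicative and supported on squarefree integers, so the sum defining $K(h)$ factors as an Euler product whose local factor at each prime $p$ is a finite sum over $(e_1,e_2) \in \{0,1\}^2$, subject to the local divisibility condition $\min(2e_1, e_2) \leq v_p(h)$ coming from $(a^2,b) \mid h$. Enumerating the four pairs: $(0,0)$ contributes $1$; $(0,1)$ and $(1,0)$ each contribute $-1/p^2$ and are always admissible, since their $\min$ equals $0$; and $(1,1)$ contributes $1/p^3$, admissible precisely when $p \mid h$. Summing, the local factor equals $1 - 2/p^2$ when $p \nmid h$ and $1 - 2/p^2 + 1/p^3$ when $p \mid h$.

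Factoring out the unconditional product $\prod_p (1 - 2/p^2)$ and simplifying
$$ \fr{1 - 2/p^2 + 1/p^3}{1 - 2/p^2} = 1 + \fr{1/p^3}{1 - 2/p^2} = 1 + \fr{1}{p^3 - 2p} $$
produces exactly the form stated in the corollary. The principal obstacle, as noted, is verifying that the appendix's refinement of Theorem \ref{main2} applies at $\al = 1$ to eliminate the $x^{\eps}$; once granted, the rest is a routine Euler-product manipulation at each prime.
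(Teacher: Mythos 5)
Your proposal is correct and is essentially the paper's own (implicit) argument: apply Theorem \ref{main2} with $g(d)=\mu(d)/d \in \mc{A}_1$, use the appendix (valid since $\al=1$ is away from $1/2$, e.g. Case I needs only $\eps<0.01|1-2\al|$) to drop the $x^{\eps}$, and evaluate $K(h)$ as an Euler product over squarefree $a,b$, which gives exactly the stated constant. Your local condition $\min(2e_1,e_2)\leq v_p(h)$ is the right one (the $\max$ in the paper's remark on the product expansion of $K(h)$ is evidently a typo), and the enumeration of the four local terms and the final simplification are all correct.
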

\medskip

In particular, for $h=0$,
$$ \sum\lm_{n \leq x} \mu^2(n) \fr{\vp(n)}{n} = x \prod\lm_p\left( 1-\fr{2}{p^2} \right) \left( 1+\fr{1}{p^3-2p} \right) + O\left( x^{1/2} \right) .$$

\begin{rem}
\label{omega}
We observe that the Dirichlet series of $\mu^2(n) \fr{\vp(n)}{n}$ is
$$ \sum\lm_{n=1}^{\infty} \fr{\mu^2(n) \vp(n)}{n^{1+s}}=\fr{\zeta(s) H(s)}{\zeta(2s) \zeta(4s)}, $$
where $H(s)$ is absolutely convergent in $\Re(s) \geq 1/8$. Consequently, by Landau's theorem, the error term for the case $h=0$ in Corollary \ref{Cor2} is $\Omega(x^{1/2-\eps})$ if the zeta function were to have a zero close to $Re(s)=1$. This shows that Corollary \ref{Cor2} cannot be improved except for terms like $\exp\left( -c(\log x)^{2/5} (\log \log x)^{3/5} \right)$ unless a good zero-free region for the Riemann zeta function is assumed.
\end{rem}
\medskip
We also note that, by partial summation and using Theorem \ref{main1}, we can also write an asymptotic formula for $\sum\lm_{n \leq x}Q(n) F(n)G(n-h)$, for any function $Q(n)$ such that $Q(t)$ is differentiable for $1 \leq t \leq x$ and $Q'(t)$ is bounded in 
$1 \leq t \leq x$. It is as follows

$$   \sum\lm_{n \leq x} Q(n) F(n) G(n-h)= C(h) \int\lm_1^x Q(t) \, dt + Q(x)E(x;\al,\bt)+O\left( \int\lm_1^x |Q'(t)| |E(t;\al,\bt)| \, dt \right) .$$

%\begin{thm}
%\label{thm3}
%Let $F$, $G$ be as in theorem \ref{main1} and let $Q(t)$ be a continuous non-negative function, defined for $t \geq 1$ an having a continuous derivative. Then, with $E(\al,\bt,x)$ as in theorem \ref{main1}, we have
%\begin{equation*}
%\sum\lm_{n \leq x} Q(n) F(n) G(n-h)= C(h) \int\lm_1^x Q(t) \, dt + Q(x)E(\al,\bt,x)+O\left( \int\lm_1^x |Q'(t)| |E(\al,\bt,t)| \, dt \right),
%\end{equation*}
%\end{thm}

\bigskip

\section{Preliminary Lemmas}
In this section, we start with some preliminary lemmas for the proof of Theorem \ref{main1}. We assume throughout $\bt \geq\al > 0$. Recall that
$$ E(x)=E(x;\al,\bt) =\begin{cases} x^{1-\al}, & \al<\bt \text{ and } \al<1, \\ x^{1-\al} \log x, & \al=\bt<1, \\ \log x, & 1=\al<\bt, 
\\ \log^2 x, & \al=\bt=1, \\ 1, & 1<\al<\bt .\end{cases} $$
The statements of the lemmas in this section hold true for all $0<\al \leq \bt$. However, we restrict the proofs only to the case $\bt>\al$ and $\al<1$. The proof works mutandis-mutandis for $\al \geq 1$ and the case $\bt=\al$ will have an extra log factor. \vskip 0.03in
When $\bt>\al$ and $\al<1$, we find that $E(x) = O(x^{1-\al})$.
\medskip

\begin{lem}\text{ }
\label{Lem1}
\begin{itemize}
\item[(a)] If $y \geq 1$, then
$$ \sum\lm_{mn \geq y} \fr{1}{m^{1+\al}n^{1+\bt}} = O\left( \fr{E(y)}{y} \right). $$
\item[(b)] If $x \geq 1$, then
$$S=\sum\lm_{[a,b] \geq x} \fr{1}{a^{\al}b^{\bt}[a,b]} = O\left( \fr{E(x)}{x} \right).$$
\end{itemize}
\end{lem}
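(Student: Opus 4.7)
Following the restriction in the paper, I would prove both parts only in the case $\beta > \alpha$ and $\alpha < 1$, so that the target bound simplifies to $E(y)/y = y^{-\alpha}$. For part (a), the plan is to do the inner sum over $m$ first with $n$ fixed: the condition $mn \geq y$ forces $m \geq \max(1, y/n)$, and
$$\sum_{m \geq \max(1, y/n)} \frac{1}{m^{1+\alpha}} = O\left(\max(1, y/n)^{-\alpha}\right).$$
Splitting the outer sum at $n = y$, the tail $n > y$ contributes $\sum_{n > y} n^{-(1+\beta)} = O(y^{-\beta}) = O(y^{-\alpha})$ since $\beta > \alpha$, while the head $n \leq y$ contributes
$$y^{-\alpha} \sum_{n \leq y} \frac{1}{n^{1+\beta-\alpha}} = O(y^{-\alpha})$$
by convergence of the Dirichlet series (again using $\beta > \alpha$).

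For part (b), I would use the change of variables $d = (a,b)$, $a = d a_1$, $b = d b_1$ with $(a_1, b_1) = 1$, so $[a,b] = d a_1 b_1$ and the summand factors as
$$\frac{1}{a^{\alpha} b^{\beta} [a,b]} = \frac{1}{d^{1+\alpha+\beta} a_1^{1+\alpha} b_1^{1+\beta}}.$$
The constraint $[a,b] \geq x$ becomes $d a_1 b_1 \geq x$, and I would split by whether $a_1 b_1 \geq x$ or $a_1 b_1 < x$. In the first range the full $d$-sum is $O(1)$ and, after dropping the coprimality constraint, the outer sum is bounded by part (a) at $y = x$, giving $O(x^{-\alpha})$. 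In the second range, $d$ runs from $\lceil x/(a_1 b_1) \rceil$ upward, so the $d$-sum is $O((a_1 b_1/x)^{\alpha+\beta})$; the remaining double sum collapses to
$$x^{-(\alpha+\beta)} \sum_{a_1 b_1 < x} a_1^{\beta-1} b_1^{\alpha-1},$$
which I would estimate by summing over $b_1$ first (using $\sum_{b_1 \leq N} b_1^{\alpha-1} = O(N^{\alpha})$ since $\alpha > 0$) and then over $a_1$ (using $\sum_{a_1 \leq x} a_1^{\beta-1-\alpha} = O(x^{\beta-\alpha})$ since $\beta > \alpha$). The combined product $x^{-(\alpha+\beta)} \cdot x^{\alpha} \cdot x^{\beta-\alpha} = x^{-\alpha}$ matches $E(x)/x$.

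The steps are essentially elementary geometric-series estimates, so the main obstacle is bookkeeping: one has to arrange the order of summation in each piece so that the smaller exponent $\alpha$ ends up governing the final bound, and verify that the convergence thresholds $\beta > \alpha$ and $\alpha > 0$ line up correctly at each step. The omitted boundary cases $\beta = \alpha$ and $\alpha = 1$ would pick up an extra $\log$ factor exactly where one of these series degenerates into a harmonic sum, but the structure of the argument is otherwise unchanged.
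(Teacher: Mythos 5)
Your proposal is correct and follows essentially the same route as the paper: part (a) is the identical computation (sum over $m$ first, split the $n$-sum at $n=y$), and part (b) uses the same gcd factorization $a=da_1$, $b=db_1$ reducing the summand to $d^{-(1+\al+\bt)}a_1^{-(1+\al)}b_1^{-(1+\bt)}$. The only cosmetic difference is in part (b), where the paper splits on $d\le x$ versus $d>x$ and feeds the inner sum $a_1b_1\ge x/d$ into part (a), while you split on $a_1b_1\ge x$ versus $a_1b_1<x$ and handle the second range by an explicit tail estimate on the $d$-sum; both give $O(x^{-\al})$ by the same elementary series bounds.
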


\begin{proof}
We first prove (a). As $\bt>\al$, we get the sum to be equal to
\begin{equation*}
\begin{split}
\sum\lm_{n \geq 1} \fr{1}{n^{1+\bt}} \sum\lm_{m\geq y/n} \fr{1}{m^{1+\al}} &= \sum\lm_{n \leq y} \fr{1}{n^{1+\bt}} \sum\lm_{m \geq y/n} \fr{1}{n^{1+\al}} + \sum\lm_{n>y} \fr{1}{n^{1+\bt}}
\sum\lm_{m \geq 1} \fr{1}{m^{1+\al}}\\
&= O\left( \fr{1}{y^{\al}}\sum\lm_{n \leq y} \fr{1}{n^{1+\bt-\al}}\right) + O\left( \sum\lm_{n>y} \fr{1}{n^{1+\bt}} \right) =O(y^{-\al})
\end{split}
\end{equation*}
and hence (a). \vskip 0.03in

To prove (b), we split the sum depending upon the value of $l=\text{gcd}(a,b)$. Write $a=ml$ and $b=nl$. Then
$$ S \ll \sum\lm_{l \geq 1} \fr{1}{l^{1+\al+\bt}}\sum\lm_{mn \geq x/l} \fr{1}{m^{1+\al} n^{1+\bt}}. $$
Thus, using part (a),
\begin{equation*}
\begin{split}
S &\ll \sum\lm_{l \leq x} \fr{1}{l^{\al+\bt}}\fr{ E(x/l)}{x} + \sum\lm_{l>x} \fr{1}{l^{1+\al+\bt}} \sum\lm_{m,n \geq 1} \fr{1}{m^{1+\al} n^{1+\bt}}\\
&\ll x^{-\al} \sum\lm_{l \leq x} \fr{1}{l^{1+\bt}} + \sum\lm_{l>x} \fr{1}{l^{1+\al+\bt}} \ll x^{-\al}.
\end{split}
\end{equation*}
This completes the proof.
\end{proof}

\medskip

\begin{lem}
\text{ }
\label{Lem2}
\begin{itemize}
\item[(a)] 
If $y \geq 1$. then
$$ \sum\lm_{mn \leq y} \fr{1}{m^{\al} n^{\bt}} = O\left( E(y) \right). $$
\item[(b)]
If $x \geq 1$, then
$$ \sum\lm_{\st{[a,b] \leq x\\(a,b)=l}} \fr{1}{a^{\al}b^{\bt}} = O\left( \fr{E(x)}{l^{1+\bt}} \right).$$
\item[(c)]
If $x \geq 1$, then
$$ \sum\lm_{[a,b] \leq x} \fr{1}{a^{\al} b^{\bt}} = O \left( E(x) \right).$$
\end{itemize}
\end{lem}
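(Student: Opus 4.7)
My plan is to handle the three parts in order, following the same stratification by a gcd that was used in Lemma \ref{Lem1}, with part (a) doing the bookkeeping, part (b) being a direct reparametrization of (a), and part (c) being a sum of (b) over the possible gcds.

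For part (a), I will fix $n$ and sum over $m \leq y/n$ first. Since we are in the regime $\beta > \alpha$ and $\alpha < 1$, the inner sum $\sum_{m \leq y/n} m^{-\alpha}$ is $O((y/n)^{1-\alpha})$. Plugging this in gives
\[
\sum_{mn \leq y} \frac{1}{m^\alpha n^\beta} \ll y^{1-\alpha} \sum_{n \leq y} \frac{1}{n^{1+\beta-\alpha}},
\]
and the remaining sum over $n$ converges because $\beta - \alpha > 0$, giving the claimed bound $O(y^{1-\alpha}) = O(E(y))$. The boundary cases $\alpha = \beta$ or $\alpha \geq 1$ only introduce an extra $\log$ in the inner sum, accounting for the extra logarithmic factors in the definition of $E(y)$.

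For part (b), I will change variables by writing $a = lm$, $b = ln$ with $(m,n) = 1$, so that $[a,b] = lmn$ and the condition $[a,b] \leq x$ becomes $mn \leq x/l$. The sum then becomes
\[
\frac{1}{l^{\alpha+\beta}} \sum_{\substack{mn \leq x/l \\ (m,n)=1}} \frac{1}{m^\alpha n^\beta} \leq \frac{1}{l^{\alpha+\beta}} \sum_{mn \leq x/l} \frac{1}{m^\alpha n^\beta},
\]
and now part (a) applied with $y = x/l$ gives a bound $O((x/l)^{1-\alpha})$ for the inner sum. Combining, the full expression is $O(x^{1-\alpha} / l^{1+\beta}) = O(E(x)/l^{1+\beta})$, as desired.

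For part (c), I will simply decompose according to the value of $l = (a,b)$ and apply part (b): the sum is bounded by
\[
\sum_{l \geq 1} \sum_{\substack{[a,b] \leq x \\ (a,b) = l}} \frac{1}{a^\alpha b^\beta} \ll E(x) \sum_{l \geq 1} \frac{1}{l^{1+\beta}} \ll E(x),
\]
since $\beta > 0$ makes the series over $l$ absolutely convergent.

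I do not anticipate a real obstacle here — the main point of the lemma is to have a clean uniform form that will later be fed into the proof of Theorem \ref{main1}. The only minor care needed is to verify that the constant absorbs the $\log$ factors correctly in the boundary regimes $\alpha = \beta$ and $\alpha = 1$, which is why the author restricts the written proof to $\beta > \alpha$, $\alpha < 1$ — in those boundary cases the inner sum in part (a) picks up an additional $\log$, matching the cases in the definition of $E(x;\alpha,\beta)$ exactly.
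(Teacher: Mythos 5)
Your proposal is correct and follows essentially the same route as the paper: part (a) by summing the $m$-variable first and using $\sum_{m\le y/n}m^{-\al}\ll (y/n)^{1-\al}$, part (b) by writing $a=lm$, $b=ln$ with $(m,n)=1$ so that $[a,b]\le x$ becomes $mn\le x/l$ and the prefactor $l^{-(\al+\bt)}$ combines with $(x/l)^{1-\al}$ to give $x^{1-\al}/l^{1+\bt}$, and part (c) by summing (b) over $l$. No gaps.
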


\begin{proof}
We have
\begin{equation*}
\sum\lm_{mn \leq y} \fr{1}{m^{\al}n^{\bt}} = \sum\lm_{n \leq y} \fr{1}{n^{\bt}} \sum\lm_{m \leq y/n} \fr{1}{m^{\al}}= O\left( \sum\lm_{n \leq y} \fr{1}{n^{\bt}} \left( \fr{y}{n} \right)^{1-\al} \right)
=O\left(y^{1-\al}\right)
\end{equation*}
and hence (a). \vskip 0.03in
To prove (b), again split the sum depending upon the value of $l=\text{gcd}(a,b)$ and use (a). Write $a=ml$ and $b=nl$ as before. Then the given sum equals
\begin{equation*}
\fr{1}{l^{\al+\bt}}\sum\lm_{\st{mn \leq x/l\\(m,n)=1}} \fr{1}{m^{\al}n^{\bt}} \ll \fr{1}{l^{\al+\bt}}\left(\fr{x}{l}\right)^{1-\al}
\end{equation*}
and this proves (b). \vskip 0.04in
Now, (c) is obtained easily from (b).
\end{proof}

\medskip

\begin{lem}
\label{Lem3}
\text{ }
\begin{itemize}
\item[(a)] Let $y \geq 1$ and $|k| \leq y/2$. Then
$$ S_1=\sum\lm_{m \leq y} \sum\lm_{\st{a \mid m\\b \mid m-k\\ab \geq y}}a^{-\al}b^{-\bt} = O\left( E(y) \right)$$
and the $O$-constant is absolute.
\item[(b)]
Let $x \geq 1$ and $|h| \leq x/2$. Then
$$ S_2 = \sum\lm_{n \leq x} \sum\lm_{\st{c \mid n \\ d \mid n-h\\ [c,d] \geq x}} c^{-\al} d^{-\bt} = O\left( E(x) \right)$$
and the $O$-constant is absolute.
\end{itemize}
\end{lem}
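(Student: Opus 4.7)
The plan is to prove part (b) first and then deduce part (a) by splitting the constraint $ab \geq y$ according to the size of $[a,b]$.

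For part (b), I would swap the order of summation to obtain
\[ S_2 = \sum_{\substack{c,d \geq 1 \\ [c,d] \geq x \\ (c,d) \mid h}} c^{-\alpha} d^{-\beta} \cdot N(c,d), \]
where $N(c,d) = |\{n \leq x : c \mid n,\ d \mid (n-h)\}|$. By the Chinese remainder theorem, $N(c,d)$ vanishes unless $(c,d) \mid h$, in which case $N(c,d) \leq x/[c,d] + 1$. Splitting the bound accordingly yields $S_2 \leq x \Sigma_1 + \Sigma_2$, where $\Sigma_1 = \sum (c^{\alpha} d^{\beta} [c,d])^{-1}$ and $\Sigma_2 = \sum c^{-\alpha} d^{-\beta}$, both over $[c,d] \geq x$ and $(c,d) \mid h$. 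The first piece $x\Sigma_1$ is $O(E(x))$ directly via Lemma~\ref{Lem1}(b), after dropping the divisibility condition. For the residual $\Sigma_2$, I would exploit that $[c,d] \geq x$ allows at most one valid $n \leq x$ per pair, so $\Sigma_2$ can be rewritten as $\sum_{n \leq x}(\text{inner})$; the constraints $c \mid n$ and $[c,d] \geq x$ force $d \geq x/c$. Using the elementary estimate $\sum_{d \mid (n-h),\ d \geq x/c} d^{-\beta} \leq (c/x)^{\beta} d(n-h)$ followed by $\sigma_{\beta - \alpha}(n) \leq n^{\beta - \alpha} d(n)$, the inner sum is bounded by $x^{-\alpha} d(n) d(n-h)$; summing over $n$ via standard shifted divisor correlations then delivers $\Sigma_2 \ll E(x)$.

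For part (a), I would split $S_1$ based on the size of $[a,b]$. The contribution with $[a,b] \geq y$ coincides with $S_2$ of part (b) (taking $x = y$, $h = k$), hence is $O(E(y))$. The remaining contribution, with $[a,b] < y$ but $ab \geq y$, necessarily has $(a,b) \geq 2$ since $(a,b) = ab/[a,b]$. Parameterizing $a = la'$, $b = lb'$ with $(a',b') = 1$ and $l \geq 2$, $l \mid k$, and using the counting bound $N \leq 2y/[a,b]$ (available since $[a,b] < y$), the contribution reduces to $O\bigl(y \sum_{l \geq 2} l^{-1-\alpha-\beta}(y/l^2)^{-\alpha}\bigr)$ via Lemma~\ref{Lem1}(a). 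This simplifies to $y^{1-\alpha} \sum_{l \geq 2} l^{\alpha - \beta - 1}$, convergent thanks to $\beta > \alpha$, so this contribution is $O(E(y))$ (with a logarithmic correction absorbed into $E(y)$ in the boundary case $\alpha = \beta$).

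The main obstacle is the residual sum $\Sigma_2$ in part (b): it is not absolutely convergent without the implicit upper bounds $c \leq x$ and $d \leq 3x/2$ coming from the existence of a valid $n$, so one must pass back to a sum over $n$ and handle the resulting divisor correlation with care to avoid spurious dependence on $h$ and to match the claimed absolute $O(E(x))$ bound (especially in the regime $\alpha < \beta < 1$, where $E(x) = x^{1-\alpha}$ carries no logarithmic factor).
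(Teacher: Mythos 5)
Your reduction of (a) to (b) is sound (the piece with $[a,b]\ge y$ is exactly $S_2$, and the piece with $ab\ge y>[a,b]$ forces $(a,b)\ge 2$ and is controlled by Lemma \ref{Lem1}(a) essentially as you describe), but your proof of (b) has a genuine gap, and it is precisely the one you flag at the end without resolving. After swapping the order of summation, the term $x\Sigma_1$ is indeed handled by Lemma \ref{Lem1}(b), but the residual $\Sigma_2$ is where the lemma actually lives. Passing back to a sum over $n$ and using $\sum_{d\mid n-h,\ d\ge x/c}d^{-\bt}\le (c/x)^{\bt}\tau(n-h)$ together with $\sg_{\bt-\al}(n)\le n^{\bt-\al}\tau(n)$ bounds the inner sum by $x^{-\al}\tau(n)\tau(n-h)$, and the shifted divisor correlation $\sum_{n\le x}\tau(n)\tau(n-h)$ is of order $x(\log x)^2$ (already for $h=1$ it is $\sim \fr{6}{\pi^2}x\log^2 x$). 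So this route yields $\Sigma_2\ll x^{1-\al}(\log x)^2$, which is strictly weaker than the claimed $E(x)=x^{1-\al}$ in the main case $\al<\bt$, $\al<1$, and weaker than $x^{1-\al}\log x$ when $\al=\bt<1$. Any pointwise replacement of a divisor sum by $\tau$ will lose these logarithms on average over $n$, so the obstacle you name is not a technicality but a structural defect of the decomposition.

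The paper avoids this by proving (a) \emph{first} and deducing (b) from it. The key device in (a) is the complementary-divisor substitution $m=ac$, $m-k=bd$: the condition $ab\ge y$ becomes $cd\le m(m-k)/y$, i.e.\ one sums over the \emph{small} cofactors $c,d$; the congruences $m\cg 0\md{c}$, $m\cg k\md{d}$ then produce a factor $[c,d]^{-\al-\bt}$ from the $m$-sum, and the resulting weights $c^{\al}d^{\bt}[c,d]^{-\al-\bt}$ make the double sum converge absolutely (via Lemma \ref{Lem2}(b)) with no logarithmic loss. Part (b) then follows by extracting $l=(c,d)\mid h$ and applying (a) to $n/l$, $h/l$. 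To salvage your order of argument you would need an analogous reorganization of $\Sigma_2$; as written, the proposal does not establish the stated bound.
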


\begin{proof}
To prove (a), put $m=ac$, $m-k=bd$. Then writing the sum in terms of $c$ and $d$, we have
$$ S_1=\sum\lm_{m \leq y} \sum\lm_{\st{c \mid m\\ d \mid m-k \\ cd \leq \fr{m(m-k)}{y}}} \left( \fr{m}{c} \right)^{-\al} \left( \fr{m-k}{d} \right)^{-\bt}. $$
We note that $cd \leq \fr{m(m-k)}{y} \leq m$. This implies $m \geq cd$. Thus,
$$ S_1 \ll \sum\lm_{c \leq y} c^{\al} d^{\bt} \sum\lm_{\st{m \cg 0 \md{c}\\m \cg k \md{d}\\cd \leq m \leq y}} m^{-\al} (m-k)^{-\bt}. $$
The congruence on $m$ gives $m \cg r \md{[c,d]}$. Thus the $m$-sum is at most
$$ \ll \sum\lm_{\st{m \cg r \md{[c,d]}\\cd \leq m \leq y}} m^{-\al -\bt} \ll \sum\lm_{\st{m \cg 0 \md{[c,d]}\\cd \leq m \leq 2y}} m^{-\al-\bt}. $$
Let $\text{gcd}(c,d)=l$ and write $m=j[c,d]$, with $l \leq j \leq \fr{2y}{[c,d]}$. The $m$-sum is then
$$ \ll [c,d]^{-\al-\bt} \sum\lm_{l \leq j \leq\fr{2y}{[c,d]}} j^{-\al-\bt}. $$
Hence
$$ S_1 \ll \sum\lm_{\st{l,j\\l \leq j}}j^{-\al-\bt} \sum\lm_{\st{[c,d] \leq 2y/j \\ (c,d)=l}} \fr{c^{\al} d^{\bt}}{ \ [c,d]^{\al+\bt}}. $$
The second sum above is 
$$ \sum\lm_{\st{[c,d] \leq 2y/j\\ (c,d)=l}} \fr{c^{\al} d^{\bt} l^{\al+\bt}}{(cd)^{\al+\bt}} \ll l^{\al+\bt} \sum\lm_{\st{[c,d] \leq 2y/j \\ (c,d)=l}} c^{-\bt} d^{-\al}. $$
From Lemma \ref{Lem2} (b), the above sum is
$$ = O\left( \fr{E(y/j)}{l^{1-\al}} \right). $$
For $\al<1$, this error is 
$$ O\left( \fr{y^{1-\al}}{(jl)^{1-\al} } \right). $$
Thus, 
$$ S_1 \ll y^{1-\al} \sum\lm_j \fr{1}{j^{1+\bt}} \sum\lm_{l\leq j} \fr{1}{l^{1-\al}} \ll y^{1-\al} \sum\lm_{j \leq y} \fr{1}{j^{1+\bt-\al}} $$
and this proves (a). \vskip 0.03in

Now, we prove (b) by splitting the sum into $(c,d)=l$. Write the given sum as
\begin{equation*} 
\begin{split}
S_2 &= \sum\lm_{n \leq x} \sum\lm_{\st{l \mid n \\ l \mid n-h}} \sum\lm_{\st{c \mid n \\ d \mid n-h\\ cd \geq lx \\ (c,d)=l}} c^{-\al} d^{-\bt}
=\sum\lm_{n \leq x} \sum\lm_{\st{l \mid h}} \sum\lm_{\st{lc \mid n  \\ ld \mid n-h \\ cd \geq x/l\\(c,d)=1}} (lc)^{-\al} (ld)^{-\bt}\\
&\ll \sum\lm_{l \mid h} l^{-\al-\bt} \sum\lm_{\st{n \leq x\\ n \cg 0 \md{l}}} \sum\lm_{\st{c \mid n/l \\ d \mid (n-h)/l \\ cd \geq x/l}} c^{-\al} d^{-\bt}.
\end{split}
\end{equation*}
Write $n/l=n'$ and $h/l=h'$ so that the given sum reduces to
\begin{equation*}
S_2 \ll \sum\lm_{l \mid h} l^{-\al-\bt} \sum\lm_{n' \leq x/l} \sum\lm_{\st{c \mid n' \\ d \mid n'-h' \\ cd \geq x/l}} c^{-\al} d^{-\bt}. 
\end{equation*}
Therefore, by part (a), it follows that
$$ S_2 \ll \sum\lm_{l \mid h} l^{-\al-\bt} E(x/l) \ll x^{1-\al} \sum\lm_{l \mid h} \fr{1}{l^{1+\bt}} \ll x^{1-\al}.$$
This proves (b).
\end{proof}
\medskip
Now, we give preliminaries for the proof of Theorem \ref{main2}. Recall that 
$$ E_1(x) = E_1(x;\al) = \begin{cases} x^{1-\al}, & 0<\al \leq 1/2, \\ x^{1/2}, & \al>1/2. \end{cases}$$
\begin{lem}\text{ }
\label{Lem4}
\begin{itemize}
 \item[(a)] $$ S= \sum\lm_{H \leq n \leq y} \sum\lm_{\st{ca^2  \mid n \\ b \mid n-h \\ a^2b>z}}b^{-\al} = O\left(\fr{y^{\eps}}{c} \left( \fr{y}{z} \right)^{\al} E_1(x)\right). $$
 \item[(b)] $$ \sum\lm_{H \leq n \leq x} \sum\lm_{\st{a^2 \mid n \\ b \mid n-h \\ [a^2,b]>x}} b^{-\al} = O(x^{\eps} E_1(x)). $$
\end{itemize}
\end{lem}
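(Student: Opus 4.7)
My plan is to prove part (a) by direct estimation, and then reduce part (b) to part (a) by decomposing the sum according to $g = \gcd(a^2, b)$.

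For part (a), I would interchange summation to fix $a$ first. The condition $ca^2 \mid n$ with $n \in [H, y]$ parameterizes $n = ca^2 m$ with $m$ in a range of length $\ll y/(ca^2)$. For each such $n$ (with $n \neq h$), I would bound the inner sum crudely as
$$\sum_{\substack{b \mid n - h \\ b > z/a^2}} b^{-\al} \le \left(\frac{z}{a^2}\right)^{-\al} \tau(|n - h|) \ll \frac{a^{2\al}}{z^{\al}}\, y^{\eps},$$
using the standard divisor bound $\tau(N) \ll N^{\eps}$ and $|n - h| \ll y$. Summing over $m$ and then over $a \leq \sqrt{y/c}$ reduces the problem to
$$S \ll \frac{y^{1+\eps}}{c z^{\al}} \sum_{a \leq \sqrt{y/c}} a^{2\al - 2},$$
and the remaining tail over $a$ evaluates to $O(1)$, $O(\log y)$, or $O((y/c)^{\al - 1/2})$ depending on whether $\al < 1/2$, $= 1/2$, or $> 1/2$. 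A routine case check confirms that each case matches the stated bound (for $\al > 1/2$ it is even tighter by a factor $c^{1/2 - \al} \leq 1$). The single degenerate value $n = h$, where $|n - h| = 0$, is bounded separately.

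For part (b), set $g = \gcd(a^2, b)$. The conditions $a^2 \mid n$ and $g \mid n - h$ force $g \mid h$, so $g$ ranges over divisors of $h$ (or over all positive integers if $h = 0$). Writing $b = g b_1$, the constraint $[a^2, b] > x$ becomes $a^2 b_1 > x$. To linearize $g \mid a^2$, I would parameterize $a = a_0(g) k$, where $a_0(g) := \prod_p p^{\lceil v_p(g)/2 \rceil}$ is the smallest positive integer with $g \mid a_0(g)^2$; then $a^2 = g\, f(g)\, k^2$ where $f(g) := a_0(g)^2/g = \prod_{p : v_p(g) \text{ odd}} p$. Setting $n' = n/g$ and $h' = h/g$, the conditions transform into $n' \in [H/g, x/g]$, $f(g) k^2 \mid n'$, $b_1 \mid n' - h'$, and $f(g) k^2 \cdot b_1 > x/g$ --- precisely the setup of part (a) with $c \mapsto f(g)$, $a \mapsto k$, $b \mapsto b_1$, and $y = z \mapsto x/g$. (The coprimality $\gcd(a^2/g, b_1) = 1$ implicit in $g = \gcd(a^2, b)$ is dropped for an upper bound.) Part (a) bounds the inner sum by $O((x/g)^{\eps} E_1(x/g) / f(g))$. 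Including the weight $g^{-\al}$ from $b^{-\al} = g^{-\al} b_1^{-\al}$ and summing,
$$S \ll x^{\eps} \sum_{g} \frac{E_1(x/g)}{g^{\al} f(g)} \ll x^{\eps} E_1(x),$$
where the last bound uses $\tau(h) \ll x^{\eps}$ when $h \neq 0$ (restricting $g$ to divisors of $h$), and absolute convergence of the multiplicative series (e.g.\ $\sum_g g^{-1}/f(g) = \prod_p (1 + p^{-2})/(1 - p^{-2})$ when $\al \leq 1/2$, and $\sum_g g^{-1/2 - \al}/f(g)$ when $\al > 1/2$) when $h = 0$.

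The main obstacle I anticipate is the algebraic setup in part (b): producing the right parameterization of $a$ subject to $g \mid a^2$ --- namely, the introduction of $a_0(g)$ and $f(g)$ --- so that the inner sum lands exactly in the shape of part (a). Once that identification is made, the reduction is immediate, and the only remaining care lies in handling the $h = 0$ case (convergence of the series) separately from the $h \neq 0$ case (divisor bound).
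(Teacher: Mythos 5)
Your proposal follows essentially the same route as the paper: part (a) by bounding the inner divisor sum via $\tau(n-h)\ll y^{\eps}$ and summing over $a$ (the paper does this dyadically in $a$ and $b$, which is equivalent to your direct summation), and part (b) by decomposing according to $\gcd(a^2,b)$ --- your $g$, $a_0(g)$ and $f(g)$ are exactly the paper's $l_1^2l_2$, $l_1l_2$ and $l_2$. One small slip in part (b): since the hypothesis of part (a) reads $a^2b>z$ (without the factor $c$), the constraint $f(g)k^2b_1>x/g$ corresponds to $z=x/(g\,f(g))$, not $z=x/g$ as you wrote; taking $z=x/g$ applies part (a) to a strictly smaller range and does not cover all terms. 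The fix is immediate and only costs an extra factor $(y/z)^{\al}=f(g)^{\al}$ in each term, which is harmless for the convergence of your final sum over $g$ (and matches the paper's choice $z=x/(l_1l_2)^2$).
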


\begin{proof}
We fist prove (a). Observe that since $ca^2 \mid n$, we have $ca^2 \leq y$. Break the sum over $a$ and $b$ dyadic-ally i.e let $a \sim A$ and $b \sim B$. Then
\begin{equation*}
\begin{split}
S_{A,B} &=  \sum\lm_{H \leq n \leq y} \left( \sum\lm_{\st{ca^2 \mid n \\ a \sim A}} 1 \right) \left( \sum\lm_{\st{b \mid n-h \\ b \sim B}}  b^{-\al}\right) \ll B^{-\al} y^{\eps} \sum\lm_{H \leq n \leq y} \sum\lm_{\st{ca^2 \mid n\\ a \sim A}} 1 \\
&\ll y^{\eps}B^{-\al} \sum\lm_{a \sim A} \sum\lm_{\st{H \leq n \leq y \\ n \cg 0 \md{a^2c}}} 1 \ll y^{\eps} B^{-\al} \sum\lm_{a \sim A} \left( \fr{y}{ca^2} + O(1) \right) \ll \fr{y^{1+\eps}}{cAB^{\al}}.
\end{split}
\end{equation*}
Now, summing over $A$ and $B$ in geometric progressions with $A \leq y^{1/2}$, $B \leq y$ and $A^2 B >z$, we obtain the desired result. \vskip 0.02in
We now prove (b). Let $(a^2,b)=l_1^2l_2$, with $l_2$ square-free. Hence, we have $a=kl_1l_2$ and $b=ml_1^2l_2$ and $[a^2,b]=k^2 m(l_1l_2)^2$. For a fixed $l_1,l_2$, the desired sum is
$$ \sum\lm_{H \leq n \leq x} \sum\lm_{\st{k^2l_1^2 l_2^2 \mid n \\ ml_1^2l_2 \mid n-h \\ k^2m (l_1l_2)^2>x \\ l_1^2l_2 \mid h}} b^{-\al}. $$
Write $h=h' l_1^2l_2$ and $n=n' l_1^2l_2$. The given sum now becomes,
 $$ \ll \sum\lm_{\st{H \leq n \leq x \\ n \cg 0(l_1^2 l_2)}} \sum\lm_{\st{k^2 l_1^2 l_2 \mid n \\ ml_1^2 l_2 \mid n-h \\ k^2m > x/(l_1l_2)^2}} b^{-\al} 
 \ll (l_1^2 l_2)^{-\al} \sum\lm_{H/l_1^2l_2 \leq n' \leq x/(l_1^2l_2)} \sum\lm_{\st{l_2k^2 \mid n' \\ m \mid n' - h' \\ k^2m>x/(l_1l_2)^2}} m^{-\al}. $$
 Applying part (a) to the above sum with $y=\fr{x}{l_1^2l_2}$, $z=\fr{x}{(l_1l_2)^2}$ and $c=l_2$, we obtain that for a fixed $l_1,l_2$ the given sum is,
 $$ \ll (l_1^2 l_2)^{-\al} \left( \fr{x}{l_1^2 l_2} \right)^{\eps} l_2^{\al-1} E_1\left( \fr{x}{l_1^2 l_2} \right). $$
 Summing over $l_1^2l_2 \leq x$, we obtain the desired result.
\end{proof}
\begin{rem}
In the final step above, we have summed over all $l_1^2l_2 \leq x$ instead of $l_1^2 l_2 \mid h$. This shows that the $O$-constant is indeed independent of $h$.
\end{rem}

\medskip

\begin{lem}\text{ }
\label{Lem5} 
\begin{itemize}
 \item[(a)] $$\sum\lm_{a^2b \leq y} b^{-\al} = O(E_1(y)). $$
 \item[(b)] $$ \sum\lm_{[a^2,b] \leq x} b^{-\al} = O(E_1(x)). $$
\end{itemize}
\end{lem}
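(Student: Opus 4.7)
Both parts of this lemma are simpler variants of Lemmas~\ref{Lem1} and~\ref{Lem2}, with the single variable $a$ replaced by $a^2$. The plan is to establish (a) by a direct double-sum estimate, and then deduce (b) by splitting according to $(a^2,b)$, in the same spirit as the proof of Lemma~\ref{Lem4}(b).

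For (a), I would swap the order of summation and evaluate the $a$-sum first: for fixed $b$, the count of $a$ with $a^2 \leq y/b$ is $O((y/b)^{1/2})$, so
$$ \sum\lm_{a^2 b \leq y} b^{-\al} \ll y^{1/2} \sum\lm_{b \leq y} b^{-\al - 1/2}. $$
The remaining sum over $b$ converges for $\al>1/2$, giving $O(y^{1/2})$, and is of size $O(y^{1/2-\al})$ for $\al<1/2$, giving $O(y^{1-\al})$. Both cases are what is claimed by $E_1(y)$.

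For (b), I would borrow the parameterization from Lemma~\ref{Lem4}(b). Write $(a^2, b) = l_1^2 l_2$ with $l_2$ squarefree. Since $l_2$ is squarefree, one checks that necessarily $a = l_1 l_2 k$ and $b = l_1^2 l_2 m$ for positive integers $k, m$, and a short calculation yields $[a^2, b] = l_1^2 l_2^2 k^2 m$. Thus $[a^2, b] \leq x$ is equivalent to $k^2 m \leq x/(l_1 l_2)^2$, and dropping the coprimality condition between $k^2 l_2$ and $m$ only enlarges the sum. Bounding the inner sum over $k, m$ by part (a), the problem reduces to
$$ \sum\lm_{l_1, l_2} (l_1^2 l_2)^{-\al}\, E_1\!\left( \fr{x}{(l_1 l_2)^2} \right). $$
Substituting either definition of $E_1$, the exponents of $l_1$ and of $l_2$ each come out to be strictly less than $-1$ (as can be verified in both ranges of $\al$), so the outer sum converges absolutely and produces $O(E_1(x))$.

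The only mildly delicate point is the boundary case $\al=1/2$ in (a): the sum $\sum\lm_{b \leq y} b^{-\al - 1/2}$ is then of size $\log y$, yielding $O(y^{1/2}\log y)$ rather than the nominal $O(y^{1/2})$. This logarithmic loss is harmless since it is already absorbed by the $x^{\eps}$ appearing in the statement of Theorem~\ref{main2}, and hence does not affect any subsequent application.
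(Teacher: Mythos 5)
Your proof is correct and follows essentially the same route as the paper: part (a) by the direct double-sum estimate in the style of Lemma \ref{Lem2}(a), and part (b) via the parameterization $(a^2,b)=l_1^2l_2$ with $a=kl_1l_2$, $b=ml_1^2l_2$ exactly as in Lemma \ref{Lem4}(b), reducing to part (a) and summing over $l_1,l_2$. Your remark about the extra $\log$ at the boundary $\al=1/2$ is a fair observation (the paper implicitly tolerates the same loss, which is absorbed by the $x^{\eps}$ in Theorem \ref{main2}).
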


\begin{proof}
For (a), we follow the proof of Lemma \ref{Lem2} (a). For (b), let $(a^2,b)=l_1^2l_2$, with $l_2$ square-free. Write $a=kl_1l_2$ and $b=ml_1^2l_2$ as in the proof of Lemma \ref{Lem4} (b). The sum then reduces to the sum in part (a). Summing over $l_1^2l_2 \leq x$ gives the desired result. 
\end{proof}
\medskip

\begin{lem} \text{ }
\label{Lem7}
\begin{itemize}
 \item[(a)] $$ \sum\lm_{a^2b>y} \fr{1}{a^2 b^{1+\al}} = O\left( \fr{E_1(y)}{y} \right). $$
 \item[(b)] $$ \sum\lm_{[a^2, b] >x} \fr{b^{-\al}}{[a^2,b]} = O\left( \fr{E_1(x)}{x} \right). $$
\end{itemize}
\end{lem}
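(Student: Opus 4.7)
The plan is to prove (a) directly by splitting the $a$-sum at $a \sim y^{1/2}$ and then reduce (b) to (a) using the same $\gcd$-parameterization $(a^2,b)=l_1^2 l_2$ (with $l_2$ squarefree) that was exploited in Lemma \ref{Lem4}(b) and Lemma \ref{Lem5}(b).

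For part (a), I would separate the diagonal. When $a>y^{1/2}$, the constraint $a^2b>y$ is automatic, and since $\alpha>0$, the inner geometric-type sum over $b$ is bounded, so this contribution is $\ll \sum_{a>y^{1/2}} a^{-2} \ll y^{-1/2}$. When $a\leq y^{1/2}$, the tail bound $\sum_{b>y/a^2} b^{-1-\alpha} \ll (a^2/y)^{\alpha}$ yields
\[
\sum_{a \leq y^{1/2}} \frac{1}{a^2} \cdot \left(\frac{a^2}{y}\right)^{\alpha} = y^{-\alpha}\sum_{a \leq y^{1/2}} a^{2\alpha-2}.
\]
If $0<\alpha<1/2$ the last sum converges to a constant, giving $O(y^{-\alpha}) = O(E_1(y)/y)$. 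If $\alpha>1/2$, the partial sum is $\asymp y^{\alpha-1/2}$ so the whole contribution is $O(y^{-1/2}) = O(E_1(y)/y)$. In either case the bound matches the $y^{-1/2}$ term coming from large $a$, giving the claim (the case $\alpha=1/2$ is the boundary where a harmless logarithmic factor appears and is absorbed consistently with the treatment of $E_1$ elsewhere in the paper).

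For part (b), I would write $(a^2,b)=l_1^2 l_2$ with $l_2$ squarefree, so that $a = k l_1 l_2$, $b = m l_1^2 l_2$, and $[a^2,b] = k^2 m (l_1 l_2)^2$. Dropping the coprimality requirement on $(k,m)$ for an upper bound and using $b^{-\alpha} = (l_1^2 l_2)^{-\alpha} m^{-\alpha}$, the sum becomes
\[
\sum_{l_1, l_2} \frac{1}{(l_1^2 l_2)^{\alpha} (l_1 l_2)^2} \sum_{\substack{k,m \geq 1 \\ k^2 m > x/(l_1 l_2)^2}} \frac{1}{k^2 m^{1+\alpha}}.
\]
Now I apply part (a) with $y = x/(l_1 l_2)^2$ to the inner sum, which gives $\ll E_1(x/(l_1 l_2)^2) \cdot (l_1 l_2)^2 / x$. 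After cancelling the $(l_1 l_2)^2$ factors, the outer sum becomes $x^{-1}\sum_{l_1,l_2}(l_1^2 l_2)^{-\alpha} E_1\!\left(x/(l_1 l_2)^2\right)$.

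The remaining step is to verify convergence of the $l_1,l_2$ sum in both regimes of $E_1$. For $0<\alpha \leq 1/2$ one substitutes $E_1(t) = t^{1-\alpha}$ and finds the series $\sum l_1^{-2} l_2^{-(2-\alpha)}$, which converges since $\alpha<1$, producing a total of $O(x^{-\alpha}) = O(E_1(x)/x)$. For $\alpha>1/2$ one substitutes $E_1(t)=t^{1/2}$ and gets $\sum l_1^{-(2\alpha+1)} l_2^{-(\alpha+1)}$, again convergent, giving $O(x^{-1/2}) = O(E_1(x)/x)$. The only mild obstacle is keeping track of the parameterization so that the exponents of $l_1$ and $l_2$ come out with enough margin above $1$; apart from this bookkeeping the argument is a direct reduction to (a) in the spirit of Lemmas \ref{Lem4}(b) and \ref{Lem5}(b).
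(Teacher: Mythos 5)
Your proof is correct and follows essentially the same route as the paper, whose own argument is only a two-line sketch: part (a) by the splitting used in Lemma \ref{Lem1}(a), and part (b) via the parameterization $(a^2,b)=l_1^2l_2$, $a=kl_1l_2$, $b=ml_1^2l_2$, reducing to part (a) and summing over $l_1,l_2$. You have simply filled in the details (including the exponent bookkeeping and the boundary case $\al=1/2$, where the extra logarithm is indeed absorbed by the $x^{\eps}$ in Theorem \ref{main2}).
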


\begin{proof}
For (a), we follow the proof of Lemma \ref{Lem1} (a). For (b), let $(a^2,b)=l_1^2 l_2$ with $l_2$ square-free. Then $a=kl_1l_2$ and $b=ml_1^2l_2$. The sum then reduces to a sum of the kind in part (a). Summing over $l_1,l_2$ then gives the desired result.
\end{proof}

\medskip
\begin{defn}
Let the function $L(n)$ be defined by
\begin{equation*} 
\label{M} L(n) = \prod\lm_{p \mid n} p^{\floor{\fr{v_p(n)}{2}}}. 
\end{equation*}
In particular, if $s$ is square-free, then $L(r^2 s)=r$.
\end{defn}

\medskip
\begin{lem}
\label{solutions}
Let $a$, $m$ be positive integers, $h \neq 0$. Let $g=\text{gcd}(h,m)$. Then the equation
$$ ax^2 \cg h \md{m}, $$
has at most $L(m) \tau(m)$ solutions modulo $m$.
\end{lem}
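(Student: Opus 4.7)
The plan is to reduce the count modulo $m$ to a product of counts at each prime power via the Chinese Remainder Theorem. Since $L$ and $\tau$ are both multiplicative, the bound $L(m)\tau(m)$ factors as $\prod_{p^k \| m}(k+1) p^{\lfloor k/2\rfloor}$, and the number of solutions of $ax^2 \equiv h \pmod m$ equals the product over $p^k \| m$ of the number of solutions of $ax^2 \equiv h \pmod{p^k}$. Hence it suffices to prove, for each prime power $p^k$ dividing $m$, that the prime-power count is bounded by $(k+1)p^{\lfloor k/2\rfloor}$.

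For a fixed prime power, I would set $\alpha = v_p(a)$ and $\gamma = v_p(h)$ and split into cases according to the relative sizes of $\alpha$, $\gamma$, and $k$. If $\gamma < \min(\alpha, k)$, reducing the equation modulo $p^{\gamma+1}$ forces $h \equiv 0 \pmod{p^{\gamma+1}}$, contradicting $v_p(h) = \gamma$, so there are no solutions. If $\gamma \geq k$, i.e.\ $p^k \mid h$, the equation becomes $p^k \mid ax^2$; with $\alpha' = \min(\alpha, k)$ this forces $v_p(x) \geq \lceil (k-\alpha')/2\rceil$, yielding $p^{\lfloor(k+\alpha')/2\rfloor}$ residues modulo $p^k$. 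In the generic range $\alpha \leq \gamma < k$, any solution must satisfy $v_p(ax^2) = \gamma$, forcing $v_p(x) = (\gamma-\alpha)/2$ (and giving no solutions unless $\gamma-\alpha$ is even). Writing $x = p^{(\gamma-\alpha)/2}\, y$ with $\gcd(y,p)=1$ and cancelling $p^{\gamma}$ reduces the equation to $a'y^2 \equiv h' \pmod{p^{k-\gamma}}$, where $a' = a/p^{\alpha}$ and $h' = h/p^{\gamma}$ are $p$-adic units. Hensel's lemma for odd $p$ supplies at most two residues for $y$ modulo $p^{k-\gamma}$, each of which lifts to $p^{(\gamma+\alpha)/2}$ residues for $x$ modulo $p^k$.

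Assembling the cases, the factor $\tau(p^k) = k+1$ in the target bound is designed to absorb the at most $k+1$ possible values $j \in \{0,1,\ldots,k\}$ taken by $v_p(x)$ across the different cases, while the factor $L(p^k) = p^{\lfloor k/2\rfloor}$ is designed to absorb the Hensel lift count in each case. Multiplying across primes then delivers $L(m)\tau(m)$.

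The main obstacle is twofold. First, the prime $p = 2$ must be treated by the refined Hensel lemma, since $y^2 \equiv c \pmod{2^k}$ can admit up to four unit solutions rather than two; the small constant loss there must be absorbed into the combinatorial factor $\tau(m)$. Second, and more delicate, is verifying that the Hensel lift exponent $p^{(\gamma+\alpha)/2}$ really is dominated by the budget $p^{\lfloor k/2\rfloor}$ uniformly throughout the admissible range; it is precisely here that the auxiliary quantity $g = \gcd(h,m)$ introduced in the statement should enter the bookkeeping, via the refined inequality $v_p(\gamma) \leq v_p(g)$, so that the per-prime count can be recast as a function of $v_p(g)$ and thereby controlled uniformly in $h$.
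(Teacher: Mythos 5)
Your route---reducing by the Chinese Remainder Theorem to a single prime power and then running a valuation case analysis with Hensel's lemma---is genuinely different from the paper's. The paper first cancels $e=\gcd(a,m)$ from the congruence, passes to $x^2\equiv k \pmod{m_1}$ with $m_1=m/e$ and a unit coefficient, and then factors $m_1=q_1q_2$ according to whether $v_p(m_1)\le v_p(k)$ or not, bounding the two pieces by $L(q_1)$ and $\tau(q_2)$ respectively; the whole point of that first step is to remove the parameter $\alpha=v_p(a)$ before any counting happens. Your local computations are correct as far as they go, and in the coprime situation $\alpha=0$ they do assemble into the stated bound: the generic count $2p^{\gamma/2}$ (with $\gamma\le k-1$) and the count $p^{\lfloor k/2\rfloor}$ in the case $p^k\mid h$ are both at most $(k+1)p^{\lfloor k/2\rfloor}$, and the extra factor at $p=2$ is absorbed after checking $k\le 2$ by hand.

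The genuine gap is exactly the point you defer to your final paragraph and never close: the inequality $2p^{(\gamma+\alpha)/2}\le (k+1)p^{\lfloor k/2\rfloor}$ that your generic case needs is false once $\alpha$ is allowed to be large, and no bookkeeping with $g=\gcd(h,m)$ can repair it, since $g$ does not occur in the bound $L(m)\tau(m)$ at all. Concretely, your (correct) count for $a=h=p^{k-1}$, $m=p^k$ is $2p^{k-1}$ residues; for $p=3$, $k=3$ this is $18$, while $L(27)\tau(27)=3\cdot 4=12$. Likewise in your case $\gamma\ge k$ the count $p^{\lfloor(k+\alpha')/2\rfloor}$ can be as large as $p^k$, again exceeding $(k+1)p^{\lfloor k/2\rfloor}$. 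So the case analysis cannot be summed into the claimed bound as it stands: you must either first extract $\gcd(a,m)$ and count solutions of the reduced congruence modulo the reduced modulus (which is what the paper does, and which forces $\alpha=0$ at every prime), or else carry an explicit factor depending on $\gcd(a,m)$ through to the final bound. As written, the proposal correctly identifies the obstacle but does not overcome it, and the missing verification is not a routine check---it is the place where the argument actually breaks.
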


\begin{proof}
If $(a,m)>1$, then $\text{gcd}(a,m) \mid h$. Cancelling the factor, we have
$$ ax_1^2 \cg h_1 \md{m_1}, $$
where $m_1=\fr{m}{(a,m)}$ and $(m_1,a_1)=1$. \\ \vskip 0.01in
Note that any solution of the latter equation lifts to a unique solution of $ax^2 \cg h \md{m}$. Since $(m_1,a_1)=1$, the latter equation is the same as $x^2 \cg k \md{m_1}$. Now, write $m_1=q_1q_2$, where $q_1$ is the product of prime powers $p^l$ with $v_p(m_1) \leq v_p(k)$ and $q_2$ is a product of those prime powers $p^l$ with $v_p(m_1)>v_p(k)$.\vskip 0.01in
The equation $x^2 \cg k \md{q_1}$ is the same as $x^2 \cg 0 \md{q_1}$ and has at most $L(q_1)$ solutions. The equation $x^2 \cg k \md{q_2}$ has at most $\tau(q_2)$ solutions. Thus, the total number of solutions is at most $L(q_1) \tau(q_2)$. 
Since $q_1 \mid m$, we get $L(q_1) \leq L(m)$. Since $\tau(q_2) \leq \tau(m)$, we are through.  
\end{proof}

\bigskip

\section{Proof of Theorem \tps{\ref{main1}}{\ref{main1}}}
Now, we prove Theorem \ref{main1}. We have
\begin{equation*}
S= \sum\lm_{H \leq n \leq x} \sum\lm_{\st{a \mid n \\ b \mid n-h}} f(a) g(b) = \sum\lm_{H \leq n \leq x} \sum\lm_{[a,b] \leq x} f(a) g(b) + \sum\lm_{H \leq n \leq x} \sum\lm_{[a,b]>x} f(a) g(b).
\end{equation*}
The second term on the rightmost side above is $O(E(x))$ by Lemma \ref{Lem3} (b). The first term is
$$ \sum\lm_{[a,b] \leq x} f(a)g(b) \sum\lm_{\st{H \leq n \leq x\\ n \cg 0\md{a}\\ n \cg h \md{b}}} 1 = \sum\lm_{\st{[a,b] \leq x\\ (a,b) \mid h}} f(a) g(b) \left( \fr{x-H}{[a,b]} + O(1) \right) $$
and the $O$-term is $O\left( E(x) \right)$ by Lemma \ref{Lem2} (c). The main term is
$$ (x-H) \sum\lm_{(a,b) \mid h} \fr{f(a)g(b)}{[a,b]} - (x-H) \sum\lm_{\st{(a,b) \mid h \\ [a,b]>x}} \fr{f(a)g(b)}{[a,b]}.$$
The first term is $(x-H) C(h)$ and the second term is $O(E(x))$ by Lemma \ref{Lem1} (b).
\bigskip
\section{Comparison with earlier results}
\label{compare}
Now, we make comparison of our results with earlier results. \\ \vskip 0.03in
In Theorem \ref{main1}, we take $F(n)=\fr{n}{\vp(n)}$ and $G(n)=\fr{\sg(n)}{n}$. In this case, $f(p)=\fr{1}{p-1}$, $f(p^{\al})=0$ for $\al \geq 2$ and $g(n)=1/n$. Hence, one can take $\al=1-\eps$ for any $\eps>0$ and $\bt=1$. This gives by Theorem \ref{main1},

\begin{cor} \text{ }
\begin{itemize}
 \item[(a)] $$ \sum\lm_{n \leq x} \fr{\sg(n+1)}{n+1} \fr{n}{\vp(n)} =  \prod\lm_p \left( 1+\fr{2p+1}{p(p^2-1)} \right) + O(x^{\eps}).$$
 \item[(b)] $$ \sum\lm_{n \leq x} \fr{\sg(n+1)}{\vp(n)} = x \prod\lm_p \left( 1 + \fr{2p+1}{p(p^2-1)} \right) + O(x^{\eps}).$$
\end{itemize}
\end{cor}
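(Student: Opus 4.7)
My plan is to realize $F(n) = n/\vp(n)$ and $G(n) = \sg(n)/n$ as $1*f$ and $1*g$, verify the hypotheses of Theorem \ref{main1}, apply that theorem with $h = -1$, and then evaluate $C(-1)$ via its Euler product.

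First I would use M\"obius inversion together with the multiplicativity of $F$ and $G$ to identify $f$ and $g$. A short computation gives $f(p) = 1/(p-1)$, $f(p^k) = 0$ for $k \geq 2$, and $g(p^k) = p^{-k}$, i.e.\ $f(n) = \mu^2(n)/\vp(n)$ and $g(n) = 1/n$. The standard bound $n/\vp(n) \ll \log \log n$ yields $f \in \mc{A}_{1-\eps}$ for every $\eps>0$, while clearly $g \in \mc{A}_1$. Taking $\al = 1-\eps$ and $\bt = 1$, we fall into the case $\al < \bt$, $\al < 1$ of \eqref{Ealbtx}, so $E(x;\al,\bt) = x^{\eps}$. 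Applying Theorem \ref{main1} with $h = -1$ (so $H = 1$) gives
$$ \sum\lm_{n \leq x} F(n) G(n+1) = x\, C(-1) + O(x^{\eps}). $$

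Next I would evaluate $C(-1)$ through its Euler product. Since $v_p(-1) = 0$, only pairs $(e_1, e_2)$ with $\min\{e_1, e_2\} = 0$ contribute to each local factor. Three cases arise: $e_1 = e_2 = 0$ contributes $1$; $e_1 = 0$ with $e_2 \geq 1$ contributes $\sum\lm_{e_2 \geq 1} p^{-2 e_2} = \fr{1}{p^2 - 1}$; and $e_1 \geq 1$ with $e_2 = 0$ reduces to only $e_1 = 1$, since $f$ vanishes on higher prime powers, contributing $f(p)/p = \fr{1}{p(p-1)}$. Summing the three contributions over the common denominator $p(p^2 - 1)$ gives $C(-1) = \prod\lm_p \left( 1 + \fr{2p+1}{p(p^2-1)} \right)$, which is part (a).

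For part (b), I would use the identity
$$ \fr{\sg(n+1)}{\vp(n)} = \fr{n+1}{n}\, F(n) G(n+1) = F(n) G(n+1) + \fr{F(n) G(n+1)}{n}. $$
The first sum is handled by part (a). For the second, the bounds $F(n), G(n+1) \ll \log \log x$ (uniformly in $n \leq x$) give $\sum\lm_{n \leq x} F(n) G(n+1)/n \ll (\log\log x)^2 \log x = O(x^{\eps})$, which is absorbed into the error term. The only genuinely nontrivial step is the Euler product evaluation of $C(-1)$, and that is entirely elementary; no real obstacle arises.
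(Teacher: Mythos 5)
Your proposal is correct and follows exactly the route the paper intends: identify $f(n)=\mu^2(n)/\vp(n)$ and $g(n)=1/n$, apply Theorem \ref{main1} with $\al=1-\eps$, $\bt=1$, $h=-1$, and evaluate the Euler product of $C(-1)$; your handling of part (b) via the factor $(n+1)/n$ is the natural (and presumably intended) step the paper leaves implicit. Note that your computation yields $x\,C(-1)+O(x^{\eps})$ in part (a), which shows the stated main term $\prod_p\bigl(1+\fr{2p+1}{p(p^2-1)}\bigr)$ is missing a factor of $x$ --- a typo in the paper that your argument implicitly corrects.
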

\medskip
For comparison, we note that Stepanauskas \cite{3} has proved
$$ \sum\lm_{n\leq x} \fr{\sg(n+1)}{\vp(n)} = x\prod\lm_p \left( 1+\fr{2p+1}{p(p^2-1)} \right) + O\left( \fr{x}{(\log x)^2} \right).$$

\medskip

The method of proof of Theorem \ref{main1} can also be used to prove an asymptotic formula for 
$$\sum\lm_{p \leq x} F(p+h) G(p+k). $$
We explain this with an example $F(n)=G(n)=\fr{\vp(n)}{n}$. We prove

\begin{thm}
\label{pshift}
Fix $A>0$. Then
$$ \sum\lm_{p \leq x} \fr{\vp(p+2)}{p+2} \fr{\vp(p+1)}{p+1} = \fr{\text{li}(x)}{2} \prod\lm_{p>2} \left( 1-\fr{2}{p(p-1)} \right) + O\left( \fr{x}{(\log x)^{A-1}} \right).$$
Here the $O$-constant depends only upon $A$.
\end{thm}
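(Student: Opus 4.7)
The plan is to mirror the proof of Theorem \ref{main1}, but replace the count of integers in a joint arithmetic progression by a count of \emph{primes} in one, which is exactly what Bombieri--Vinogradov handles. Writing $\vp(n)/n = \sum\lm_{d \mid n} \mu(d)/d$ and swapping the orders of summation,
$$ S(x) := \sum\lm_{p \leq x} \fr{\vp(p+1)}{p+1} \fr{\vp(p+2)}{p+2} = \sum\lm_{d_1, d_2 \geq 1} \fr{\mu(d_1) \mu(d_2)}{d_1 d_2} \, \pi_{d_1, d_2}(x), $$
where $\pi_{d_1, d_2}(x) := \#\{p \leq x : d_1 \mid p+1, \ d_2 \mid p+2\}$. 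By the Chinese Remainder Theorem, the system $p \cg -1 \md{d_1}$, $p \cg -2 \md{d_2}$ is solvable only when $(d_1, d_2) \mid 1$, forcing $(d_1, d_2) = 1$, and then collapses to a single class $r = r(d_1, d_2) \md{d_1 d_2}$ with $\gcd(r, d_1 d_2) = \gcd(-2, d_2)$. Even values of $d_2$ therefore contribute at most $O(1)$ primes and can be absorbed into the error.

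Next I would truncate at $Q = x^{1/2}/(\log x)^B$ with $B = B(A)$ sufficiently large. Writing $\pi(x; d_1 d_2, r) = \mathrm{li}(x)/\vp(d_1 d_2) + E(x; d_1 d_2, r)$ on the range $d_1 d_2 \leq Q$, a divisor-weighted form of Bombieri--Vinogradov (obtained by noting that each squarefree $q$ appears in $2^{\omega(q)}$ coprime factorisations, applying Cauchy--Schwarz, and using the trivial bound $|E| \ll x/\log x$) yields
$$ \sum\lm_{\st{(d_1, d_2) = 1 \\ d_2 \text{ odd} \\ d_1 d_2 \leq Q}} \fr{|E(x; d_1 d_2, r)|}{d_1 d_2} \ll_A \fr{x}{(\log x)^{A-1}}. $$
Extending the remaining main-term sum $\mathrm{li}(x) \sum \fr{\mu(d_1) \mu(d_2)}{d_1 d_2 \vp(d_1) \vp(d_2)}$ to all coprime $(d_1, d_2)$ with $d_2$ odd costs only $O(\mathrm{li}(x)/Q)$, which is absorbed.

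For the tail $d_1 d_2 > Q$ I would split at $d_1 d_2 \leq x/(\log x)^2$, applying Brun--Titchmarsh $\pi(x; q, r) \ll x/(\vp(q) \log(x/q))$ and summing $2^{\omega(q)} \log \log q/q^2$ over $q > Q$; for $d_1 d_2 > x/(\log x)^2$ the arithmetic progression contains only $O(\log^2 x)$ integers, so crude summation with $\sum_{q \leq x^2} 2^{\omega(q)}/q \ll \log^2 x$ is harmless. Finally, the infinite main-term sum factors as an Euler product $\prod\lm_p L_p$: the prime $p = 2$ admits only the pairs $(v_2(d_1), v_2(d_2)) \in \{(0,0), (1,0)\}$, giving $L_2 = 1 - 1/(2\vp(2)) = 1/2$, while for $p > 2$ the allowed $p$-parts $\{(0,0), (1,0), (0,1)\}$ give $L_p = 1 - 2/(p(p-1))$, assembling into the stated constant.

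The main obstacle is taming the logarithmic loss from the $1/(d_1 d_2)$ weight in the Bombieri--Vinogradov step: the multiplicity $2^{\omega(q)}$ together with the harmonic summation produces polynomial factors of $\log x$ that must be reabsorbed by enlarging $B$, and this is precisely what forces the exponent $A-1$ (rather than $A$) in the final error term. Tracking the Euler-product computation and keeping the argument uniform in the shifts $1, 2$ are routine, and the same scheme would work for any fixed coprime shifts $(h, k)$.
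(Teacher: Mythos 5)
Your proposal is correct, but it takes a genuinely different route from the paper. The paper truncates at the much lower level $[a,b]\leq(\log x)^{A}$ and uses only the Siegel--Walfisz theorem there; the middle range $(\log x)^{A}<[a,b]\leq x$ is then disposed of \emph{trivially}, by replacing the sum over primes $p\leq x$ with the sum over all integers $n\leq x$ and counting integers in the joint progression, which contributes $\ll x\sum_{ab>(\log x)^{A}}(ab)^{-2}+O(\log^2 x)\ll x/(\log x)^{A-1}$; the range $[a,b]>x$ is handled by Lemma \ref{Lem3}(b). This trivial middle-range bound is exactly what produces the exponent $A-1$ in the paper, whereas in your argument the only loss comes from the Cauchy--Schwarz step in the divisor-weighted Bombieri--Vinogradov estimate, and since the level $C$ of log-saving in Bombieri--Vinogradov can be taken arbitrarily large, your method actually yields the stronger error $O_A\left(x/(\log x)^{A}\right)$ for every $A$, and with an effective constant (Siegel--Walfisz is ineffective). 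What the paper's approach buys in exchange is simplicity and robustness: it needs nothing beyond primes in progressions to small moduli plus elementary divisor sums, and transfers verbatim to general $F=f*1$, $G=g*1$ with $f\in\mc{A}_{\al}$, $g\in\mc{A}_{\bt}$, as remarked after the theorem. Your Euler-product computation ($L_2=1/2$, $L_p=1-2/(p(p-1))$ for $p>2$), the coprimality/parity analysis of the residue class, and the Brun--Titchmarsh treatment of the tail are all sound, so the proposal stands as a valid (indeed sharper) alternative proof.
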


\begin{rem}
The above result can be compared with Corollary 1 of \cite{4}, where the error term $O\left( \fr{\text{li}(x)}{(\log \log x)^B} \right)$ is much larger.  
\end{rem}
\medskip
\begin{proof}[Proof of Theorem \ref{pshift}]
We have
\begin{equation*}
\begin{split}
S &= \sum\lm_{p \leq x} \fr{\vp(p+2)}{p+2} \fr{\vp(p+1)}{p+1} =\sum\lm_{p \leq x} \sum\lm_{\st{a \mid p+2 \\ b \mid p+1}} \fr{\mu(a) \mu(b)}{ab}\\
&=T_1 + T_2+T_3,
\end{split}
\end{equation*}
where $T_1$ corresponds for $[a,b] \leq (\log x)^A$, $T_2$ for $(\log x)^A<[a,b] \leq x$ and $T_3$ for $[a,b]>x$. \\ \vskip 0.02in
Now, 
\begin{equation}
\label{T3}
T_3 \leq \sum\lm_{n \leq x} \sum\lm_{\st{a \mid n+2 \\ b \mid n+1\\ [a,b] \geq x}} \fr{1}{ab} = O\left( \log^2 x \right),
\end{equation}
by Lemma \ref{Lem3} (b).\vskip 0.05in
Moreover,
\begin{equation}
\label{T2}
T_2 \leq \sum\lm_{n \leq x} \sum\lm_{\st{a \mid n+2\\ b \mid n+1\\ (\log x)^A <[a,b] \leq x}} \fr{1}{ab} = \sum\lm_{\st{(a,b)=1\\(\log x)^A <[a,b] \leq x}} \fr{1}{ab} \left( \fr{x}{ab} + O(1) \right) = O\left( \fr{x}{(\log x)^{A-1}} \right).
\end{equation}
Now, 
\begin{equation}
\label{T1}
T_1 = \sum\lm_{p \leq x} \sum\lm_{\st{[a,b] \leq (\log x)^A \\ a \mid p+2 \\ b \mid p+1}} \fr{\mu(a)\mu(b)}{ab} = \sum\lm_{[a,b] \leq (\log x)^A} \fr{\mu(a)\mu(b)}{ab} \sum\lm_{\st{p \leq x \\ p \cg -2 \md{a}\\ p \cg -1 \md{b}}} 1.
\end{equation}
For $p \neq 2$, the $p$-sum survives only if $(a,b)=1$ and $a$ is odd. Thus,
$$ T_1 = \sum\lm_{\st{a \text{ odd} \geq 1 \\ (a,b)=1 \\ ab \leq (\log x)^A}} \fr{\mu(a) \mu(b)}{ab} \left( \fr{\text{li}(x)}{\vp(ab)} + O\left( \fr{x}{(\log x)^{A}} \right) \right), $$
by Siegel's theorem on primes in arithmetic progressions. Clearly, the $O$-term is $O\left( \fr{x}{(\log x)^{A-1}} \right)$. \\ \vskip 0.02in
The main term is 
$$ \text{li}(x) \sum\lm_{\st{a \text{ odd} \\ (a,b)=1}} \fr{\mu(a)\mu(b)}{ab \vp(ab)} - \text{li}(x) \sum\lm_{\st{a \text{ odd} \\ (a,b)=1 \\ ab>(\log x)^A}} \fr{\mu(a)\mu(b)}{ab \vp(ab)}. $$
The second term is $O\left( \fr{x}{(\log x)^{A-1}} \right)$ and the first term is $ \fr{\text{li}(x)}{2} \prod\lm_{p>2} \left( 1-\fr{2}{p(p-1)} \right) $.
This completes the proof.
\end{proof}

\begin{rem}
The method of proof of Theorem \ref{pshift} gives the same error term for all sums of the form 
$$\sum\lm_{p \leq x} F(p+h) G(p+k),$$ 
whenever $F=f*1$, $G=g*1$ and $f$ and $g$ are in $\mc{A}_{\al}$, $\mc{A}_{\bt}$ respectively. 
\end{rem}

\medskip

In Theorem \ref{main1}, we take $F(n)=\fr{\sg_s(n)}{n^s}$, $G(n)=\fr{\sg_t(n)}{n^t}$, with $s \leq t$, where $\sg_s(n)=\sum\lm_{d \mid n} d^s$. Then $f(n)=\fr{1}{n^s}$ and $g(n)=\fr{1}{n^t}$. 
Taking $\al=s$ and $\bt=t$ gives

\begin{cor}
Uniformly for $|h| \leq N/2$, we have
$$ \sum\lm_{n \leq N} \fr{\sg_s(n)}{n^s} \fr{\sg_t(n+h)}{(n+h)^t} = (N - H) \fr{\zeta(s+1) \zeta(t+1)}{\zeta(s+t+2)} \sg_{-(s+t+1)}(h) + O\left( E(N;s,t) \right),$$
where the $O$-term depends only on $s$ and $t$ and is independent of $h$. In particular, the error term is
$$ \begin{cases} O(N^{1-s}), & s<1 \text{ and }t>s, \\ O(N^{1-s} \log N), & s=t<1, \\ O(\log N), & 1=s<t, \\ O(\log^2 N), & s=t=1, \\ O(1), & s>1. \end{cases} $$
\end{cor}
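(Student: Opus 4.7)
The proof is a direct invocation of Theorem~\ref{main1} together with a closed-form evaluation of the constant $C(h)$. Setting $f(n) = n^{-s}$ and $g(n) = n^{-t}$, we have $F = f*1 = \sigma_s(n)/n^s$ and $G = g*1 = \sigma_t(n)/n^t$, and trivially $f \in \mathcal{A}_s$, $g \in \mathcal{A}_t$. Replacing $h$ by $-h$ in Theorem~\ref{main1} (legitimate since $|-h| = |h| \leq N/2$), we obtain
$$\sum_{H \leq n \leq N} \frac{\sigma_s(n)}{n^s}\frac{\sigma_t(n+h)}{(n+h)^t} = (N-H)\,C(-h) + O\bigl(E(N; s, t)\bigr),$$
so the remaining task is to show that
$$C(-h) = \sum_{\substack{a,b \geq 1 \\ (a,b) \mid h}} \frac{1}{a^{s} b^{t} [a,b]} = \frac{\zeta(s+1)\zeta(t+1)}{\zeta(s+t+2)}\,\sigma_{-(s+t+1)}(h).$$

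To evaluate this, I parametrize by $d = (a,b)$ and write $a = d a'$, $b = d b'$ with $(a',b') = 1$, so that $[a,b] = d a' b'$. The divisibility $(a,b) \mid h$ becomes $d \mid h$, and the sum factorizes as
$$C(-h) = \sum_{d \mid h} \frac{1}{d^{s+t+1}} \sum_{\substack{a', b' \geq 1 \\ (a',b') = 1}} \frac{1}{(a')^{s+1} (b')^{t+1}}.$$
Detecting the coprimality condition via $\sum_{e \mid (a',b')} \mu(e)$ and interchanging the order of summation, the inner sum collapses to
$$\zeta(s+1)\,\zeta(t+1) \sum_{e \geq 1} \frac{\mu(e)}{e^{s+t+2}} = \frac{\zeta(s+1)\,\zeta(t+1)}{\zeta(s+t+2)},$$
while the outer sum $\sum_{d \mid h} d^{-(s+t+1)}$ is $\sigma_{-(s+t+1)}(h)$ by definition. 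All series in sight converge absolutely because $s, t > 0$.

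The five-case form of the error term then reads off directly from the definition of $E(x;\alpha,\beta)$ in \eqref{Ealbtx} applied with $\alpha = s$, $\beta = t$, with the mild observation that the case $s = t > 1$ (not explicitly listed in \eqref{Ealbtx}) still contributes $O(1)$ since the preliminary Lemmas~\ref{Lem1}--\ref{Lem3} hold uniformly for all $0 < \alpha \leq \beta$. No real obstacle is expected: the heavy lifting is already done in Theorem~\ref{main1}, and the corollary is essentially a bookkeeping computation of the Dirichlet series $\sum_{a,b} 1/(a^s b^t [a,b])$ restricted to $(a,b) \mid h$.
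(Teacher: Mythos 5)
Your proposal is correct and follows the same route as the paper: the corollary is stated there as an immediate specialization of Theorem~\ref{main1} with $f(n)=n^{-s}$, $g(n)=n^{-t}$, $\alpha=s$, $\beta=t$. Your explicit evaluation of $C(h)$ via $d=(a,b)$ and M\"obius detection of coprimality, yielding $\zeta(s+1)\zeta(t+1)\zeta(s+t+2)^{-1}\sigma_{-(s+t+1)}(h)$, correctly supplies the bookkeeping the paper leaves implicit.
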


\medskip

We can compare the above result with Corollary 1 of Coppola, Murty, Saha \cite{16}, where the error term depends on $h$, and as a function of $N$, given by
$$ \begin{cases} O(N^{1-s} (\log N)^{4-2s}), & s<1, \\ O(\log^3 N), & s=1, \\ O(1), & s>1. \end{cases} $$
Similar remarks also apply for Corollary 2 of \cite{16}.

\bigskip

\section{Proof of Theorem \tps{\ref{main2}}{\ref{main2}}}
We have,
\begin{equation*}
\begin{split}
 S &= \sum\lm_{H \leq n \leq x} \mu^2(n) G(n-h) = \sum\lm_{H \leq n \leq x} \sum\lm_{\st{a^2 \mid n \\ b \mid n-h}} \mu(a) g(b)\\
 &= T_1 + T_2,
\end{split} 
\end{equation*}
where $T_1$ corresponds to $[a^2, b] \leq x$ and $T_2$ corresponds to $[a^2,b]>x$. \\
We note that $T_2=O(x^{\eps} E_1(x))$ by Lemma \ref{Lem4} (b). \\ \vskip 0.02in
Now,
\begin{equation*}
\begin{split}
T_1 &= \sum\lm_{a,b} \mu(a) g(b) \sum\lm_{\st{n \cg 0 \md{a^2} \\ n \cg h \md{b} \\ H \leq n \leq x }} 1 = \sum\lm_{\st{[a^2,b] \leq x \\ (a^2,b) \mid h  }} \mu(a) g(b) \left( \fr{x-H}{[a^2,b]} + O(1) \right)\\
&= T_3 + T_4.
\end{split}
\end{equation*}
Now,
$$ T_3 = (x-H) \sum\lm_{(a^2,b) \mid h} \fr{\mu(a)g(b)}{[a^2,b]} + O\left( x \sum\lm_{[a^2,b] \geq x} \fr{|g(b)|}{[a^2,b]} \right). $$
In $T_3$, the main term is $(x-H) K(h)$ and the $O$-term is $O\left( E_1(x) \right)$ by Lemma \ref{Lem7} (b). \vskip 0.03in
Moreover, 
$$ T_4 = O\left( \sum\lm_{[a^2,b] \leq x} |g(b)| \right) = O\left( E_1(x) \right), $$
by Lemma \ref{Lem5} (b). This completes the proof.

\bigskip

\section{Appendix}
We now sketch how $x^{\eps}$ could be saved from the error term in Theorem \ref{main2}, if $\al$ is not in the neighbourhood of $1/2$. Let us recall the term $x^{\eps}$ occurs only in Lemma \ref{Lem4}, and so we concentrate only on this lemma. Recall that in the proof Lemma \ref{Lem4}, we have
$$ S_{A,B} = \sum\lm_{H \leq n \leq x} \sum\lm_{\st{a^2 \mid n \\ b \mid n-h \\ a \sim A \\ b \sim B}} \mu(a) b^{-\al} $$
and $$S=\sum\lm_{\st{A=2^k \leq x^{1/2} \\ B=2^l \leq x \\ A^2 B>x}} S_{A,B},$$
where $A$ and $B$ are powers of $2$ satisfying $A \leq x^{1/2}$, $B \leq x$ and $A^2 B > x$. 

\subsection*{Case I} $x^{0.05} \leq A \leq x^{0.45}$. In this case, we rewrite Lemma \ref{Lem4} as Claim 1.\\ \vskip 0.01in
\textbf{Claim 1}: $$S_{A,B} \ll \fr{x^{1+\eps}}{AB^{\al}}.$$
Summing over $A$, $B$ in this case, the sum is $O(E_1(x))$ if $\eps<0.01|1-2\al|$.
\medskip
\subsection*{Case II} $A \leq x^{0.05}$. In this case, we make \\ \vskip 0.01in
\textbf{Claim 2}: $$S_{A,B} \ll \fr{x(\log A)^{10}}{AB^{\al}}.$$
To see this, since $a^2 \mid n$, we write $n=a^2 c$.
Let 
\begin{equation} \label{T} T= \left\{ (a,b,c,d): a^2c-bd=h, \ a \sim A, b \sim B \right\} \end{equation}
Thus,
$$ S_{A,B} \ll B^{-\al} \left| T \right|. $$
Now, $bd=a^2 c-h \leq 2x$ and $a^2b>x$. hence $d \leq 2a^2 \ll x^{0.1}$. \vskip 0.01in
To count the number of elements in $T$, fix $a$ and $d$. Then the equation
$ a^2 c - h \cg 0 \md{d}$ has at most $(a^2,d)$ solutions for $c \md{d}$. Since $c \leq \fr{x}{a^2}$, the total number of choices for $c$ is at most
$$ \left( \fr{x}{a^2 d} + O(1) \right) (a^2,d). $$
Since $a \ll x^{0.05}$ and $d \ll x^{0.1}$, the $O$-term can be absorbed into the main term. Therefore,
$$|T| \ll \sum\lm_{\st{a \sim A \\ d \leq 2a^2}} \fr{x(a^2,d)}{a^2 d}, $$
and hence the claim. \vskip 0.01in
Summing over the relevant $A$ and $B$, we get the desired estimate $O(E_1(x))$.
 \medskip
\subsection*{Case III} $A \geq x^{0.45}$, $B>x^{0.2}$. In this case, we use Claim 1 and sum over the appropriate range of $A$ and $B$ and we get the upper bound $O(E_1(x))$.  
\subsection*{Case IV} $A \geq x^{0.45}$, $B \leq x^{0.2}$. In this case, we make \\ \vskip 0.01in
\textbf{Claim 3}: $$ S_{A,B} \ll \fr{x(\log B)^{10}}{AB^{\al}}. $$
As in the previous case, we need to estimate $|T|$, with $T$ as given in (\ref{T}). \vskip 0.05in
Since $a^2 c \leq x$ and $a>x^{0.45}$, it follows that $c < x^{0.1}$. We fix $c$ and $b$. Then from Lemma \ref{solutions}, the equation $a^2 c-h \cg 0 \md{b}$ has at most $L(b)\tau(b)$ solutions in $a \md{b}$. Since $a \sim A$, the total number of choices for $a$ is at most
$$ \left( \fr{A}{b} + O(1)\right) L(b) \tau(b).$$
Again, the $O$-term can be ignored. Since $a^2c \leq x$, we get $c \ll \fr{x}{A^2}$. Thus, summing the above with $c \ll \fr{x}{A^2}$ and $b \sim B$, the claim follows. \vskip 0.06in
Now, summing over $A$, $B$ in the desired range, we obtain the required bound.

\section*{Acknowledgement}
The authors would like to thank the reviewer for some usefull suggestions that helped to improve the content of this version of the paper. 

\bigskip

%%%%%%%%%%%%%%%%%%%%%%%%%%%%%%%%%%%%%%%%%%%%%%%%%%%%%%%%%%%%%%%%%%%%%%%%%%%

\end{document}